\newcommand{\comment}[1]{}
\newcommand{\bN}{{\mathbb N}}
\newcommand{\bR}{{\mathbb R}}
\newcounter{rek}
\title[commutators of pseudo-differential operators]{An Hardy estimate for commutators of pseudo-differential operators}         
\author{Ha Duy HUNG}    
\address{High School for Gifted Students,
Hanoi National University of Education, 136 Xuan Thuy, Hanoi, Vietnam} 
\email{{\tt hunghaduy@gmail.com}}
\author{Luong Dang KY}
\address{Department of Mathematics, Quy Nhon University, 
170 An Duong Vuong, Quy Nhon, Binh Dinh, Viet Nam} 
\email{{\tt dangky@math.cnrs.fr}}
\thanks{The first author is supported  by Vietnam National Foundation for Science and Technology Development (NAFOSTED) under grant number 101.01-2011.42. The second author is supported  by Vietnam National Foundation for Science and Technology Development (NAFOSTED) under grant number 101.02-2014.31.}
\keywords{ pseudo-differential operators,  Hardy spaces, BMO spaces, LMO spaces, commutators}
\subjclass[2010]{47G30, 42B35}
\date{}
\begin{document}
 \maketitle

\begin{abstract}
Let $T$ be a pseudo-differential operator whose symbol belongs to the H\"ormander class $S^m_{\rho,\delta}$ with $0\leq \delta<1, 0< \rho\leq 1, \delta \leq \rho$ and $-(n+1)< m \leq - (n+1)(1-\rho)$. In present paper, we prove that if $b$ is a locally integrable function satisfying
$$\sup_{{\rm balls}\; B\subset \mathbb R^n} \frac{\log(e+ 1/|B|)}{(1+ |B|)^\theta} \frac{1}{|B|}\int_{B} \Big|f(x)- \frac{1}{|B|}\int_{B} f(y) dy\Big|dx <\infty$$
for some $\theta\in [0,\infty)$, then the commutator $[b,T]$ is bounded on the local Hardy space $h^1(\mathbb R^n)$ introduced by  Goldberg \cite{Go}. 

As a consequence, when $\rho=1$ and $m=0$, we obtain an improvement of a recent result by Yang, Wang and Chen \cite{YWC}.
\end{abstract}

\newtheorem{theorem}{Theorem}[section]
\newtheorem{lemma}{Lemma}[section]
\newtheorem{proposition}{Proposition}[section]
\newtheorem{remark}{Remark}[section]
\newtheorem{corollary}{Corollary}[section]
\newtheorem{definition}{Definition}[section]
\newtheorem{example}{Example}[section]
\numberwithin{equation}{section}
\newtheorem{Theorem}{Theorem}[section]
\newtheorem{Lemma}{Lemma}[section]
\newtheorem{Proposition}{Proposition}[section]
\newtheorem{Remark}{Remark}[section]
\newtheorem{Corollary}{Corollary}[section]
\newtheorem{Definition}{Definition}[section]
\newtheorem{Example}{Example}[section]
\newtheorem{Question}{Question}
\newtheorem*{theorema}{Theorem A}
\newtheorem*{theoremb}{Theorem B}
\newtheorem*{theoremc}{Theorem C}

\section{Introduction}

Let $T$ be a Calder\'on-Zygmund operator. A classical result of  Coifman,  Rochberg and  Weiss (see \cite{CRW}), states that the commutator $[b,T]$, defined by $[b,T](f)= bTf - T(bf)$, is continuous on $L^p(\mathbb R^n)$ for $1 <p<\infty$, when $b\in BMO(\mathbb R^n)$. Unlike the theory of Calder\'on-Zygmund operators, the proof of this result does not rely on a weak type $(1, 1)$ estimate for $[b, T]$. In fact, it was shown in \cite{Ky1, Pe} that, in general, the linear commutator fails to be of weak type $(1, 1)$ and fails to be of type $(H^1, L^1)$, when $b$ is in $BMO(\mathbb R^n)$. Instead, an endpoint theory was provided for this operator.

Let $T$ be a pseudo-differential operator which is formally defined as
$$Tf(x)= \int_{\mathbb R^n} \sigma(x,\xi) e^{2\pi i x\cdot \xi} \hat f(\xi) d\xi, \;f\in \mathcal S(\mathbb R^n),$$
where  $\hat f$ denotes the Fourier transform of $f$ and $\sigma(x,\xi)$ is a symbol in the H\"ormander class $S^{m}_{\rho,\delta}$ for some $m,\rho,\delta\in \bR$ (see Section 2). Remark that $T$ is a Calder\'on-Zygmund operator if the symbol $\sigma(x,\xi)$ satisfies some additional assumptions (cf. \cite{Jo}). In analogy with the classical results in the setting of Calder\'on-Zygmund operators, when $b\in BMO(\bR^n)$, the boundedness of $[b,T]$ on Lebesgue spaces $L^p(\bR^n), 1<p<\infty$,  have been established, see for example \cite{AT, Ch, Li, Ta}. We refer to \cite{FYY, HLY, LMY} for some similar results in the setting of metric measure spaces. It is well-known that under certain conditions of $m,\rho,\delta$, the operator $T$ is bounded on $h^1(\bR^n)$ and bounded on $bmo(\bR^n)$ (cf. \cite{Go, HK, YY1, YY2}). A natural question is that can one find functions $b$ for which $[b,T]$ is bounded on $h^1(\bR^n)$ ? Recently, some endpoint results have obtained by Yang, Wang and Chen \cite{YWC}. More precisely, in \cite{YWC}, the authors proved the following.

\begin{theorema}
Let $b\in LMO_\infty(\bR^n)$. Suppose that $T$ is a pseudo-differential operator with symbol $\sigma(x,\xi)$ in the H\"ormander class $S^0_{1,\delta}$ with $0\leq \delta<1$. Then,
\begin{enumerate}[(i)]
\item $[b,T]$ is bounded from $H^1(\bR^n)$ into $L^1(\bR^n)$.
\item $[b,T]$ is bounded from $L^\infty(\bR^n)$ into $BMO(\bR^n)$.
\end{enumerate}
\end{theorema}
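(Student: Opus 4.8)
\emph{Sketch of proof.} The plan is to prove (i) by the atomic method and then to deduce (ii) from (i) by duality. Since $[b,T]$ is bounded on $L^2(\mathbb R^n)$ (cf.\ \cite{AT,Ch,Li,Ta}), a standard density argument reduces (i) to the uniform estimate $\|[b,T]a\|_{L^1(\mathbb R^n)}\lesssim\|b\|_{LMO_\infty}$ for an arbitrary $H^1$-atom $a$ supported in a ball $B=B(x_0,r)$ with $\|a\|_{\infty}\leq|B|^{-1}$ and $\int_{\mathbb R^n}a\,dx=0$. With $b_B=\frac1{|B|}\int_B b\,dy$, the starting point is the identity
$$[b,T]a=(b-b_B)\,Ta-T\big((b-b_B)a\big)=:\mathrm I-\mathrm{II},$$
which one estimates over $2B$ and over $(2B)^c$ separately.

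Over $2B$ only the $L^2$-theory is needed: by John--Nirenberg and the $L^2$-boundedness of $T$ one obtains
$$\|\mathrm I\|_{L^1(2B)}\leq\|b-b_B\|_{L^2(2B)}\,\|Ta\|_{L^2}\lesssim|B|^{1/2}\|b\|_{BMO}\cdot|B|^{-1/2}\lesssim\|b\|_{BMO},$$
and likewise $\|\mathrm{II}\|_{L^1(2B)}\leq|2B|^{1/2}\,\|T((b-b_B)a)\|_{L^2}\lesssim\|b\|_{BMO}$. Over $(2B)^c$ one uses the cancellation of $a$ together with the kernel estimate $|\nabla_yK(x,y)|\lesssim|x-y|^{-n-1}$ for $x\neq y$, valid for the kernel $K$ of an operator with symbol in $S^0_{1,\delta}$: since $\int a\,dx=0$, $|Ta(x)|\lesssim r\,|x-x_0|^{-n-1}$ on $(2B)^c$, and a decomposition of $(2B)^c$ into dyadic annuli gives $\|\mathrm I\|_{L^1((2B)^c)}\lesssim\|b\|_{BMO}$. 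For $\mathrm{II}$, one writes, for $x\in(2B)^c$,
$$\mathrm{II}(x)=\int_B\big(K(x,y)-K(x,x_0)\big)(b(y)-b_B)a(y)\,dy+K(x,x_0)\int_B(b(y)-b_B)a(y)\,dy,$$
and the first term is estimated exactly as $\mathrm I$, contributing $\lesssim\|b\|_{BMO}$.

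The main point is the remaining term $K(x,x_0)\int_B(b-b_B)a$. Since $\big|\int_B(b-b_B)a\,dy\big|\leq\|a\|_{\infty}\int_B|b-b_B|\,dy\leq\frac1{|B|}\int_B|b-b_B|\,dy=:O_B(b)$, its $L^1((2B)^c)$-norm is at most $O_B(b)\int_{(2B)^c}|K(x,x_0)|\,dx$. Here I would use that the kernel of a pseudo-differential operator with symbol in $S^0_{1,\delta}$ satisfies $|K(x,y)|\lesssim|x-y|^{-n}$ for $|x-y|\leq1$ and the rapid decay $|K(x,y)|\lesssim_N|x-y|^{-N}$ for $|x-y|\geq1$ and every $N$ --- a feature not shared by a general Calderón--Zygmund kernel. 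Consequently $\int_{(2B)^c}|K(x,x_0)|\,dx\lesssim\log(e+1/|B|)$ when $|B|\leq1$ and $\lesssim1$ otherwise, so this term is bounded by $O_B(b)\log(e+1/|B|)\lesssim\|b\|_{LMO_\infty}$ by the very definition of $LMO_\infty$. Putting the estimates together yields the uniform bound on atoms. The main difficulty lies precisely in this last term: the rapid off-diagonal decay of $K$ and the logarithmic gain encoded in $LMO_\infty$ must be exploited simultaneously, since for a generic Calderón--Zygmund operator the integral $\int_{(2B)^c}|K(x,x_0)|\,dx$ already diverges and no regularity of $b$ would compensate.

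Part (ii) follows by duality. The formal adjoint $T^*$ is again a pseudo-differential operator with symbol in $S^0_{1,\delta}$ (here the hypothesis $\delta<1$ is used), so (i) applied to $T^*$ shows that $[b,T^*]$ is bounded from $H^1(\mathbb R^n)$ into $L^1(\mathbb R^n)$. Since $\big([b,T^*]\big)^*=-[b,T]$ for real-valued $b$, and $\big(H^1(\mathbb R^n)\big)^*=BMO(\mathbb R^n)$, $\big(L^1(\mathbb R^n)\big)^*=L^\infty(\mathbb R^n)$, this is equivalent to the boundedness of $[b,T]$ from $L^\infty(\mathbb R^n)$ into $BMO(\mathbb R^n)$, which is exactly (ii).
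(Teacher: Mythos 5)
Note first that the paper does not actually prove Theorem A: it is quoted from Yang--Wang--Chen \cite{YWC}, and the authors' own contribution (Theorem \ref{the main theorem}) is a statement on the local space $h^1(\mathbb R^n)$, proved via $(h^1,2)$-atoms whose supporting balls have radius $r\leq 2$, from which Theorem A(i) follows when $\rho=1$, $m=0$. Your outline is nevertheless the natural direct attack on the global statement, and you correctly isolate the heart of the matter: after writing $[b,T]a=(b-b_B)Ta-T((b-b_B)a)$, the only term that cannot be handled by $BMO$-technology alone is $K(x,x_0)\int_B(b-b_B)a\,dy$ on $(2B)^c$, where the logarithmic divergence of $\int_{2r\leq|x-x_0|\leq 1}|K(x,x_0)|\,dx$ must be paid for by the logarithmic gain in the $LMO$ condition, while the rapid decay of $K$ beyond unit distance (Proposition \ref{AH, Theorem 1.1}(i)) controls the rest. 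This is exactly the mechanism behind Lemma \ref{the key lemma added}(ii) in the paper.

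The genuine gap is your treatment of the subscript $\infty$. In this paper $LMO_\infty=\bigcup_{\theta\geq 0}LMO_\theta$, and for $\theta>0$ a function $b\in LMO_\theta$ need not belong to $BMO$: by Lemma \ref{Ky2, Lemma 5.3} its mean oscillation over a ball of radius $r$ is controlled only up to a factor $(1+r)^{2\theta}$. You systematically write $\|b\|_{BMO}$, which is infinite for such $b$. Concretely: (a) the $L^2$-boundedness of $[b,T]$ that you invoke at the outset is known for $b\in BMO$, not for $b\in BMO_\theta$ with $\theta>0$; (b) in the near-diagonal estimate $\|(b-b_B)Ta\|_{L^1(2B)}\leq\|b-b_B\|_{L^2(2B)}\|Ta\|_{L^2}$ the right-hand side carries the factor $(1+r)^{2\theta}$, which is unbounded over $H^1$-atoms supported on large balls and is absorbed by nothing in your argument; (c) in the dyadic-annuli sums the factors $(1+2^kr)^{2\theta}$ must be beaten by the rapid off-diagonal decay of $K$, which you invoke only for the single term $K(x,x_0)\int_B(b-b_B)a$. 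Point (b) is not cosmetic: it is precisely the obstruction that leads the authors to work in $h^1$ with atoms of radius $r\leq 2$, where $(1+2^kr)^{2\theta}$ is then dominated by the factor $(1+2^kr)^{-N}$ supplied by Lemma \ref{the key lemma}. As written, your argument is complete only for $\theta=0$; to cover $LMO_\infty$ as defined in this paper you would either have to restrict to small supporting balls (i.e., prove the $h^1$ statement, as the paper does) or supply a new idea for atoms on large balls. The duality derivation of (ii) from (i) is fine modulo the same caveat, and there is also the standard (fixable) technical point that uniform bounds on $L^\infty$-normalized atoms alone do not automatically yield $H^1\to L^1$ boundedness; one should work with $(H^1,2)$-atoms and the $L^2$-theory, as in Proposition \ref{boundedness via atoms}.
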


Our main theorem is as follows.

\begin{Theorem}\label{the main theorem}
Let $b\in LMO_\infty(\bR^n)$. Suppose that $T$ is a pseudo-differential operator with symbol $\sigma(x,\xi)$ in the H\"ormander class $S^m_{\rho,\delta}$  with  $0\leq \delta<1, 0< \rho\leq 1, \delta \leq \rho$ and $-(n+1)< m \leq - (n+1)(1-\rho)$. Then,
\begin{enumerate}[(i)]
\item $[b,T]$ is bounded from $h^1(\bR^n)$ into itself.
\item $[b,T]$ is bounded from $bmo(\bR^n)$ into itself.
\end{enumerate}
\end{Theorem}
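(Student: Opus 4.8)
The strategy is the standard one for commutator estimates at the Hardy space endpoint: reduce everything to the action on atoms, and exploit the decomposition $[b,T]f = bTf - T(bf)$ together with a pointwise/atomic-type decomposition of $bf$ when $f$ is an $h^1$-atom. For part (i), let $a$ be a local $(1,\infty)$-atom (in Goldberg's sense) supported in a ball $B = B(x_0,r)$, so that $\|a\|_\infty \le |B|^{-1}$ and, if $r \le 1$, the cancellation $\int a = 0$ holds; if $r > 1$ no cancellation is assumed. Write $b_B = \frac{1}{|B|}\int_B b$. Then
\begin{equation*}
[b,T]a = (b - b_B)Ta - T\big((b-b_B)a\big).
\end{equation*}
The first term is controlled by splitting $\mathbb{R}^n$ into $2B$ and its complement: on $2B$ one uses H\"older together with the $h^1 \to h^1$ (indeed $h^1 \to L^1$) boundedness of $T$ for this range of $m,\rho,\delta$ and the $\mathrm{LMO}_\infty$ control of $\|(b-b_B)\chi_{2B}\|$ at the appropriate scale; away from $2B$ one uses the kernel estimates for $S^m_{\rho,\delta}$ symbols in this subcritical range (the kernel and its derivatives have integrable, size-and-smoothness decay like a Calder\'on–Zygmund kernel, uniformly, because $m \le -(n+1)(1-\rho)$), combined with the logarithmic growth bound on the oscillation of $b$ to sum the tail. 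The key point is that the $\mathrm{LMO}_\infty$ norm of $b$ tolerates exactly a $\log(e+1/|B|)(1+|B|)^\theta$-type loss, and the decay of the commutator kernel across dyadic annuli beats this loss.

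The second term $T\big((b-b_B)a\big)$ is handled by rewriting $g := (b-b_B)a$ as a (fixed) multiple of a single $h^1$-atom plus a correction. One checks $\|g\|_{L^1} \lesssim \|(b-b_B)\chi_B\|_{\text{avg}} \lesssim \log(e+1/|B|)(1+|B|)^\theta$ by $\mathrm{LMO}_\infty$; if $r\le 1$ one further needs to restore cancellation, so one subtracts $c\,\chi_B/|B|$ with $c = \int g$, noting $|c| \lesssim$ the same quantity, and the leftover $c\,\chi_B/|B|$ is (a constant multiple of, after normalizing) a local atom up to the $\log(e+1/|B|)$ factor — which is precisely why the $h^1$-norm estimate, rather than uniform boundedness, is what comes out. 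Here one invokes that $T$ maps $h^1$ into $h^1$; the genuine subtlety is that $g$ is supported in $B$ but is only in $h^1$ with norm comparable to $\log(e+1/|B|)(1+|B|)^\theta$, and one must verify that $T g$ still lies in $h^1$ with a comparable bound — this follows from $T: h^1 \to h^1$ and the atomic decomposition of $g$ into atoms all supported in $B$, whose $\ell^1$-sum of coefficients is controlled by the same logarithmic factor. Summing the atomic pieces against an arbitrary $h^1$-function $f = \sum \lambda_j a_j$ then gives (i), using that $\sum|\lambda_j| \lesssim \|f\|_{h^1}$ and that the per-atom bound is uniform.

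Part (ii) can be obtained either by a parallel direct argument on $bmo$, or — more cleanly — by duality: since $(h^1)^* = bmo$ and one expects a symbol-class symmetry (the transpose of $T\in S^m_{\rho,\delta}$ is, modulo a smoothing operator, again in $S^m_{\rho,\delta}$ for $\delta<\rho$, or one argues directly with the adjoint kernel when $\delta=\rho$), the adjoint of $[b,T]$ is $-[b,T^*] = [b,\widetilde{T}]$ up to a term $[b,S]$ with $S$ infinitely smoothing, and $[b,\widetilde{T}]$ is bounded on $h^1$ by part (i), hence $[b,T]$ is bounded on $bmo$ by taking adjoints. One still has to check that $[b,S]$ with $S$ smoothing is bounded on $h^1$ for $b\in \mathrm{LMO}_\infty$, which is routine from the rapid kernel decay of $S$. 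The main obstacle throughout is the borderline case $\rho=\delta$ and the endpoint exponent $m = -(n+1)(1-\rho)$, where the commutator kernel estimates are only just good enough: one must be careful that the smoothness modulus of the kernel (of order $\rho$, not $1$) still produces a convergent dyadic sum when paired against the $\log(e+1/|B|)$ oscillation growth of $b$, and that the argument does not secretly require $m$ strictly below the endpoint.
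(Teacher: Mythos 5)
Your overall skeleton (reduction to atoms, the splitting $[b,T]a=(b-b_B)Ta-T\big((b-b_B)a\big)$, annulus-by-annulus kernel estimates, and passage to part (ii) by taking adjoints of a pseudo-differential operator whose symbol is again in $S^m_{\rho,\delta}$) coincides with the paper's, and your treatment of the second term $T\big((b-b_B)a\big)$ --- decomposing $(b-b_B)a$ into a mean-zero multiple of an atom plus $c\,\chi_B/|B|$, whose $h^1$-norm is of size $\log(e+1/r)$ and is absorbed by the $1/\log(e+1/r)$ gain coming from the $LMO$ hypothesis --- is a correct variant of what the paper does. However, there is a genuine gap in your treatment of the first term. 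What you describe for $(b-b_B)Ta$ (H\"older on $2B$, kernel decay on the annuli $2^{k+1}B\setminus 2^kB$, summing the tail against the logarithmic growth of the oscillation of $b$) produces only an $L^1$ bound. Since $L^1$ does not embed into $h^1$, and $(b-b_B)Ta$ is neither compactly supported nor cancellative, this yields only $[b,T]:h^1\to L^1$ (essentially Theorem A of Yang, Wang and Chen), not the claimed $h^1\to h^1$ boundedness; nowhere in your outline do you explain how $(b-b_B)Ta$ is shown to actually belong to $h^1$.

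The paper closes precisely this gap by a duality argument that your proposal is missing: by Theorem B (Dafni's duality $(vmo(\bR^n))^*=h^1(\bR^n)$), it suffices to prove the bilinear estimate $\|f(b-b_B)Ta\|_{L^1}\le C\|b\|_{LMO_\theta}\|f\|_{bmo}$ for all $f\in C^\infty_c(\bR^n)$. Writing $f=(f-f_B)+f_B$, the oscillation part is handled by a three-factor H\"older inequality ($L^4\times L^4\times L^2$ on each annulus), playing the polynomial growth of $\|f-f_B\|_{L^4(2^kB)}$ and $\|b-b_B\|_{L^4(2^kB)}$ against the decay $\|Ta\|_{L^2(2^{k+1}B\setminus2^kB)}\lesssim 2^{-ck}(1+2^kr)^{-N}|2^kB|^{-1/2}$ of Lemma~\ref{the key lemma}; the constant part costs $|f_B|\le C\log(e+1/r)\|f\|_{bmo}$ and therefore requires the reinforced estimate $\log(e+1/r)\|(b-b_B)Ta\|_{L^1}\le C\|b\|_{LMO_\theta}$ of Lemma~\ref{the key lemma added}(ii) --- a second, independent place where $LMO$ rather than $BMO$ is essential, and which your plain $L^1$ bound does not supply. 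Without this step (or some substitute, such as a molecular characterization of $h^1$ tolerant of the missing support and cancellation), your argument does not prove part (i); consequently part (ii), which you derive from (i) by duality exactly as the paper does, is not established either.
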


Throughout the whole paper, $C$ denotes a positive geometric constant which is independent of the main parameters, but may change from line to line. For any measurable set $A\subset\bR^n$, denote by $|A|$ the Lebesgue measure of $A$.

The paper is organized as follows. In Section 2, we give some notations and preliminaries about the spaces of $BMO$ type, Hardy spaces  and  pseudo-differential operators. Section 3 is devoted to prove Theorem \ref{the main theorem}. An appendix will be given in Section 4.

{\bf Acknowledgements.} The authors would like to thank  Aline Bonami and  Sandrine Grellier for many helpful suggestions and discussions. We would also like to thank the referees for their carefully reading and helpful suggestions.

\section{Some preliminaries and notations}

As usual, $\mathcal S(\mathbb R^n)$ denotes the Schwartz class of test functions on $\mathbb R^n$, $\mathcal S'(\mathbb R^n)$ the space of tempered distributions, and $C^\infty_c(\mathbb R^n)$ the space of $C^\infty$-functions with compact support.

Let $m,\rho$ and $\delta$ be real numbers. A symbol in the H\"ormander class $S^m_{\rho,\delta}$ will be a smooth function $\sigma(x,\xi)$ defined on $\mathbb R^n\times \mathbb R^n$, satisfying the estimates
$$|D_x^\alpha D_\xi^\beta \sigma(x,\xi)|\leq C_{\alpha,\beta} (1+ |\xi|)^{m-\rho |\beta| + \delta |\alpha|},\quad\alpha,\beta\in \mathbb N^n.$$

We say that an operator $T$ is a pseudo-differential operator associated with the symbol $\sigma(x,\xi)\in S^m_{\rho,\delta}$ if it can be written as
$$Tf(x)= \int_{\mathbb R^n} \sigma(x,\xi) e^{2\pi i x\cdot \xi} \hat f(\xi) d\xi, \;f\in \mathcal S(\mathbb R^n),$$
where $\hat f$ denotes the Fourier transform of $f$. Denote by $\mathscr L^m_{\rho,\delta}$ the class of pseudo-differential operators whose symbols are in $S^m_{\rho,\delta}$.

Let $0<\rho\leq 1$, $0\leq \delta<1$ and $m\in\bR$. It is well-known (see \cite[Proposition 3.1]{HK}) that if $T\in \mathscr L^m_{\rho,\delta}$ with the symbol $\sigma(x,\xi)$, then $T$ has the distribution kernel $K(x,y)$ given by 
$$K(x,y) =\lim_{\epsilon\to 0} \int_{\mathbb R^n} e^{2\pi i (x-y)\cdot \xi} \sigma(x,\xi)\psi(\epsilon \xi)d\xi,$$
where $\psi\in C^\infty_c(\mathbb R^n)$ satisfies $\psi(\xi) \equiv 1$ for $|\xi|\leq 1$, the limit is taken in $\mathcal S'(\mathbb R^n)$ and does not depend on the choice of $\psi$. 

The following useful estimates of the kernels are due to Alvarez and  Hounie \cite[Theorem 1.1]{AH}.

\begin{proposition}\label{AH, Theorem 1.1}
Let $0<\rho\leq 1$, $0\leq \delta<1$ and $T\in \mathscr L^m_{\rho,\delta}$. Then, the distribution kernel $K(x,y)$ of $T$ is smooth outside the diagonal $\{(x,x): x\in\mathbb R^n\}$. Moreover, 
\begin{enumerate}[(i)]
\item For any $\alpha,\beta\in\mathbb N^n$, $N>0$, 
$$\sup_{|x - y|\geq 1} |x-y|^N |D^{\alpha}_x D^{\beta}_y K(x,y)|\leq C(\alpha,\beta,N).$$
\item If $M\in\mathbb N$ satisfies $M+m+n >0$, then
$$\sup_{|\alpha +\beta|=M} |D^{\alpha}_x D^{\beta}_y K(x,y)|\leq C(M) \frac{1}{|x-y|^{\frac{M+m+n}{\rho}}},\quad x\ne y.$$
\end{enumerate}
\end{proposition}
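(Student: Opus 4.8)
The plan is to reduce both parts to pointwise decay bounds for the single oscillatory integral
$$I_a(x,y)=\int_{\mathbb R^n} e^{2\pi i(x-y)\cdot\xi}\,a(x,\xi)\,d\xi,\qquad a\in S^{\mu}_{\rho,\delta},$$
interpreted as the $\mathcal S'$-limit of its regularizations $\int e^{2\pi i(x-y)\cdot\xi}a(x,\xi)\psi(\epsilon\xi)\,d\xi$, exactly as in the kernel formula recalled before the proposition. Writing $z=x-y$, differentiating that formula and using the Leibniz rule gives, for $M:=|\alpha+\beta|$,
$$D_x^\alpha D_y^\beta K(x,y)=\sum_{\alpha'\le\alpha} c_{\alpha'}\int_{\mathbb R^n} e^{2\pi i(x-y)\cdot\xi}\,\xi^{\,\alpha-\alpha'+\beta}D_x^{\alpha'}\sigma(x,\xi)\,d\xi,$$
where each amplitude $\xi^{\alpha-\alpha'+\beta}D_x^{\alpha'}\sigma$ lies in $S^{\,m+M-(1-\delta)|\alpha'|}_{\rho,\delta}$, since multiplication by $\xi^\gamma$ raises the order by $|\gamma|$ and each $x$-derivative raises it by $\delta$. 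Thus every term is an integral $I_a$ with $a\in S^\mu_{\rho,\delta}$ of order $\mu\le m+M$, the maximal order $m+M$ being attained only by the term $\alpha'=0$. The whole statement therefore follows once I establish: (a) $|I_a(x,y)|\le C_N|z|^{-N}$ for every $N$ when $|z|\ge1$, giving part (i); and (b) $|I_a(x,y)|\le C\,|z|^{-(\mu+n)/\rho}$ when $\mu+n>0$, giving part (ii) for $|z|\le1$ via the dominant term (here $m+M+n>0$ by hypothesis, and the lower-order terms are more singular-free and hence dominated), while for $|z|\ge 1$ part (ii) is read off from (a) with $N=(M+m+n)/\rho$.

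The engine for both is integration by parts against $L=\dfrac{1}{2\pi i|z|^2}\sum_{j} z_j\,\partial_{\xi_j}$, which fixes $e^{2\pi i z\cdot\xi}$, so that $I_a=\int e^{2\pi i z\cdot\xi}(L^{t})^{\ell}a\,d\xi$ with
$$\big|(L^t)^{\ell}a(x,\xi)\big|\le C_\ell\,|z|^{-\ell}(1+|\xi|)^{\mu-\rho\ell}.$$
For part (a) I would simply take $\ell>(\mu+n)/\rho$, which renders the $\xi$-integral absolutely convergent and yields $|I_a|\le C_\ell|z|^{-\ell}$; since $\ell$ is arbitrary, this is decay of every order, proving (i).

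The only genuine difficulty is (b): raw integration by parts only produces $|z|^{-\ell}$ for $\ell>(\mu+n)/\rho$, which is strictly worse than the critical exponent $(\mu+n)/\rho$ claimed. To recover the sharp power I would decompose $a$ dyadically in $\xi$, $a=\sum_{j\ge0}a_j$ with $a_j$ supported in $|\xi|\sim2^{j}$ and $|D_\xi^\gamma a_j|\le C_\gamma 2^{j(\mu-\rho|\gamma|)}$. For each piece there are two competing bounds for $I_{a_j}$: the trivial volume bound $|I_{a_j}|\le C\,2^{j(\mu+n)}$, and the integration-by-parts bound $|I_{a_j}|\le C\,|z|^{-\ell}2^{j(\mu+n-\rho\ell)}$. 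A direct comparison shows these cross over exactly at the scale $2^{j}\sim|z|^{-1/\rho}$. Summing the trivial bound over the low frequencies $2^{j}\lesssim|z|^{-1/\rho}$ (convergent because $\mu+n>0$) and the integration-by-parts bound, with any fixed $\ell>(\mu+n)/\rho$, over the high frequencies $2^{j}\gtrsim|z|^{-1/\rho}$, both geometric series are controlled by the crossover scale and deliver $|z|^{-(\mu+n)/\rho}$. This dyadic balancing at the scale $|z|^{-1/\rho}$ is the crux of the proof.

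Finally, the smoothness of $K$ off the diagonal and the independence of the limit from $\psi$ must be extracted from the regularization: all of the integrations by parts above are performed on the regularized integrand, where the extra $\xi$-derivatives that land on $\psi(\epsilon\xi)$ are supported in $|\xi|\gtrsim1/\epsilon$ and produce factors tending to $0$ as $\epsilon\to0$, so the bounds are uniform in $\epsilon$. Running the same estimates with one further $x$- or $y$-derivative shows that $D_x^\alpha D_y^\beta K_\epsilon$ converges uniformly on compact subsets of $\{(x,y):x\ne y\}$, which gives off-diagonal smoothness and justifies passing to the limit. I expect the dyadic argument in (b) to be the only substantive step; the reductions, the symbol bookkeeping, and the regularization are routine.
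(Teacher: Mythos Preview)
The paper does not prove this proposition; it is simply quoted from Alvarez and Hounie \cite{AH} as a black box, so there is no in-paper argument to compare against. Your outline is the standard proof and is essentially what one finds in \cite{AH}: reduce $D_x^\alpha D_y^\beta K$ to oscillatory integrals $I_a$ with amplitude in $S^\mu_{\rho,\delta}$, obtain arbitrary polynomial decay for $|z|\ge 1$ by iterated integration by parts in $\xi$, and for $|z|\le 1$ run a Littlewood--Paley decomposition in $\xi$ balanced at the scale $|\xi|\sim|z|^{-1/\rho}$ to hit the critical exponent $(\mu+n)/\rho$.

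One small bookkeeping point worth tightening: when you say the lower-order terms ($\alpha'\ne 0$, of order $\mu=m+M-(1-\delta)|\alpha'|$) are ``more singular-free and hence dominated'', note that your dyadic argument as written uses $\mu+n>0$ for the low-frequency sum. If $\mu+n>0$ the same argument gives $|I_a|\le C|z|^{-(\mu+n)/\rho}\le C|z|^{-(M+m+n)/\rho}$ for $|z|\le 1$, as you claim. If instead $\mu+n\le 0$, the integral is essentially bounded (absolutely convergent when $\mu<-n$, with at worst a logarithmic loss at $\mu=-n$, handled by one extra integration by parts), and a bounded quantity is again $\le C|z|^{-(M+m+n)/\rho}$ for $|z|\le 1$. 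Either way the domination holds, but it is worth saying explicitly.
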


Here and in what follows, for any ball $B\subset\bR^n$ and $f\in L^1_{\rm loc}(\bR^n)$, we denote
$$f_B:= \frac{1}{|B|}\int_B f(x)dx.$$

Let $0\leq \theta<\infty$. Following Bongioanni, Harboure and  Salinas \cite{BHS},  we say that a locally integrable function $f$ is in $BMO_\theta(\mathbb R^n)$, if
$$\|f\|_{BMO_\theta}:= \sup_{B}  \frac{1}{(1+ r_B)^\theta |B|}\int_{B} |f(y)- f_{B}|dy <\infty,$$
where  the supremum is taken over all balls $B \subset \mathbb R^n$. We then define 
\begin{equation}
BMO_\infty(\mathbb R^n) =\cup_{\theta \geq 0} BMO_{\theta}(\mathbb R^n).
\end{equation}

 A locally integrable function $f$ is said to belongs $LMO_\theta(\mathbb R^n)$ if
$$\|f\|_{LMO_\theta}:= \sup_{B} \frac{\log(e+ 1/r_B)}{(1+ r_B)^\theta} \frac{1}{|B|}\int_{B} |f(y)- f_{B}|dy <\infty,$$
where  the supremum is taken over all balls $B \subset \mathbb R^n$. We define 
\begin{equation}
LMO_\infty(\mathbb R^n) =\cup_{\theta \geq 0} LMO_{\theta}(\mathbb R^n).
\end{equation}

Let $\phi$ be a Schwartz function satisfying $\int_{\mathbb R^n}\phi(x) dx =1$.  According to Goldberg \cite{Go}, we define $h^1(\mathbb R^n)$ as the set of all $f\in \mathcal S'(\mathbb R^n)$ such that 
$$\|f\|_{h^1}: = \|\mathfrak m_\phi f\|_{L^1}<\infty,$$
where $\mathfrak m_\phi f(x):= \sup_{0<t\leq 1} |f*\phi_t(x)|$ with $\phi_t(x):= t^{-n}\phi(t^{-1}x)$.

Given $1<q\leq \infty$, a function $a$ is called an $(h^1,q)$-atom related to the ball $B=B(x_0,r)$ if $r\leq 2$ and 
\begin{enumerate}[(i)]
\item supp $a\subset B$,
\item $\|a\|_{L^q}\leq |B|^{1/q- 1}$,
\item if $0<r<1$, then $\int_{\mathbb R^n} a(x) dx =0$.
\end{enumerate}

The following useful fact is due to Yang and Zhou  \cite[Proposition 3.2]{YZ} (see also \cite{CCYY, YY1, YY2}).

\begin{proposition}\label{boundedness via atoms}
Let $1<q<\infty$. If $T$ is a bounded linear operator on $L^q(\mathbb R^n)$ satisfying $\|Ta\|_{h^1}\leq C $ for all $(h^1,q)$-atoms $a$, then $T$ can be extended to a bounded linear operator on $h^1(\mathbb R^n)$.
\end{proposition}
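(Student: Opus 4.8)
The statement is a transference principle turning $L^q$-boundedness plus a uniform $h^1$-bound on atoms into boundedness on all of $h^1(\bR^n)$. The subtlety, and the reason the naive estimate $\|T(\sum_j\lambda_j a_j)\|_{h^1}\le\sum_j|\lambda_j|\,\|Ta_j\|_{h^1}$ is not by itself a proof, is that $T$ is a priori only defined on $L^q(\bR^n)$; before bounding anything one must exhibit $Tf$ as a genuine element of $h^1(\bR^n)$ and justify that it equals the series $\sum_j\lambda_j Ta_j$. The plan is to do this first on the dense subspace $h^1(\bR^n)\cap L^q(\bR^n)$ and then extend by density. Density holds because finite linear combinations of $(h^1,q)$-atoms lie in $h^1(\bR^n)\cap L^q(\bR^n)$ (each atom is in $L^q$ by property (ii) and in $h^1$ by the atomic characterization) and are dense in $h^1(\bR^n)$.

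Fix $f\in h^1(\bR^n)\cap L^q(\bR^n)$. I would invoke the atomic decomposition of $h^1$ in the refined form providing $(h^1,q)$-atoms $a_j$ and scalars $\lambda_j$ with $\sum_j|\lambda_j|\le C\|f\|_{h^1}$ and $f=\sum_j\lambda_j a_j$ converging simultaneously in $h^1(\bR^n)$ and in $L^q(\bR^n)$. Since $T$ is bounded on $L^q(\bR^n)$ and the series converges to $f$ in $L^q(\bR^n)$, applying $T$ yields $Tf=\sum_j\lambda_j Ta_j$ with convergence in $L^q(\bR^n)$, hence in $\mathcal S'(\bR^n)$. Independently, the hypothesis $\|Ta_j\|_{h^1}\le C$ together with $\sum_j|\lambda_j|<\infty$ makes the same series converge absolutely in the Banach space $h^1(\bR^n)$ to some $g$ with $\|g\|_{h^1}\le C\sum_j|\lambda_j|\le C\|f\|_{h^1}$. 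As both $h^1(\bR^n)$ and $L^q(\bR^n)$ embed continuously into $\mathcal S'(\bR^n)$, the two limits agree as tempered distributions, so $Tf=g\in h^1(\bR^n)$ and $\|Tf\|_{h^1}\le C\|f\|_{h^1}$ for every $f\in h^1(\bR^n)\cap L^q(\bR^n)$. A standard density argument then extends $T$ uniquely to a bounded linear operator on $h^1(\bR^n)$, which proves the proposition.

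\textbf{Main obstacle.} I expect the crux to be the refined atomic decomposition: upgrading the convergence of $f=\sum_j\lambda_j a_j$ from $h^1$ (or $\mathcal S'$) to $L^q$. This is exactly where the $L^q$-boundedness hypothesis earns its keep, since it is what legitimizes the identity $Tf=\sum_j\lambda_j Ta_j$; the naive argument that omits $L^q$-boundedness fails precisely at this point. To secure the $L^q$-convergence I would revisit the construction of the decomposition through a Calder\'on-Zygmund / Whitney decomposition of the superlevel sets of the local grand maximal function of $f$, and verify that the $N$-th partial sum differs from $f$ by a function supported on the shrinking ``bad'' sets, whose $L^q$ norms tend to $0$ precisely because $f\in L^q(\bR^n)$.
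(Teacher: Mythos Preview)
The paper does not actually prove this proposition: it is stated as a known fact and attributed to Yang and Zhou \cite[Proposition~3.2]{YZ} (with further references \cite{CCYY, YY1, YY2}). So there is no ``paper's own proof'' to compare against; your proposal is in effect filling in what the authors outsource to the literature.

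That said, your outline is the standard one and is correct in spirit. You have correctly identified the genuine issue --- the counterexamples of Bownik and of Meyer, Taibleson and Weiss show that a uniform bound on atoms alone does \emph{not} guarantee boundedness on $h^1$ (or $H^1$), so the $L^q$-boundedness hypothesis is essential, and its role is exactly to legitimize $Tf=\sum_j\lambda_j Ta_j$ for $f\in h^1\cap L^q$. Your plan of producing an atomic decomposition converging in both $h^1$ and $L^q$, then comparing the two limits in $\mathcal S'$, is precisely the mechanism used in the cited references. The ``main obstacle'' paragraph is on target: the refined decomposition with $L^q$-convergence is the nontrivial ingredient, and it is obtained (as you suggest) by controlling the remainder supported on the shrinking superlevel sets of the grand maximal function. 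One small addition worth mentioning: for $f\in h^1\cap L^q$ one should also check that the extension agrees with the original $T$ on this intersection, which your argument does automatically since both are computed as the same $L^q$-limit.
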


It is well-known (see \cite{Go}) that the dual space of $h^1(\mathbb R^n)$ is $bmo(\mathbb R^n)$, namely, the space of locally integrable functions $f$ such that
$$\|f\|_{bmo}:= \sup_{B\in \mathcal D} \frac{1}{|B|}\int_B |f(x) - f_B| dx + \sup_{B\in {\mathcal D}^c} \frac{1}{|B|}\int_B |f(x)| dx <\infty,$$
where  $\mathcal D=\{B(x_0,r)\subset \bR^n: 0<r<1\}$ and ${\mathcal D}^c=\{B(x_0,r)\subset \bR^n: r\geq 1\}$. 

Denote by $vmo(\mathbb R^n)$ the closure of $C^\infty_c(\mathbb R^n)$ in the space $bmo(\mathbb R^n)$. Thanks to \cite[Theorem 9]{Da}, we have the following.

\begin{theoremb}\label{Da, Theorem 9}
The dual of the space $vmo(\mathbb R^n)$ is the space $h^1(\mathbb R^n)$.
\end{theoremb}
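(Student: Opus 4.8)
The plan is to realize $h^1(\bR^n)$ as $(vmo(\bR^n))^*$ by erecting a two-sided norm estimate on top of Goldberg's duality $(h^1)^*=bmo$, which we take as known, together with the definition of $vmo$ as the $bmo$-closure of $C^\infty_c(\bR^n)$. The relevant pairing is the canonical one $\langle g,f\rangle$ between $bmo$ and $h^1$; restricting its second slot produces a natural map $\Phi\colon h^1\to(vmo)^*$, $\Phi(f)(g)=\langle g,f\rangle$, and the whole point is to prove that $\Phi$ is an isomorphism. The easy half is the bound $|\langle g,f\rangle|\le\|g\|_{bmo}\|f\|_{h^1}$ for $g\in vmo\subset bmo$ and $f\in h^1$, which shows $\Phi$ is well defined with $\|\Phi f\|_{(vmo)^*}\le\|f\|_{h^1}$.

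Next I would prove that $vmo$ is \emph{norming} for $h^1$, which makes $\Phi$ bounded below and hence an isomorphic embedding with closed range. One cannot simply invoke $(h^1)^*=bmo$ here, since the supremum defining $\|f\|_{h^1}$ is a priori taken over all of $bmo$; the task is to show it is essentially attained on $vmo$. Given $g\in bmo$, I would manufacture $g_k\in C^\infty_c\subset vmo$ by truncating $g$ to a large ball and mollifying at a small scale, and check, using the two-part definition of the $bmo$-norm, that $\|g_k\|_{bmo}\le C\|g\|_{bmo}$ uniformly in $k$ while $\langle g_k,f\rangle\to\langle g,f\rangle$ for the fixed $f\in h^1$ (that is, weak-$*$ convergence in $bmo=(h^1)^*$). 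It follows that $\sup\{|\langle g,f\rangle|:g\in vmo,\ \|g\|_{bmo}\le1\}\ge C^{-1}\|f\|_{h^1}$, so $\Phi$ is an isomorphism onto its closed range $R=\Phi(h^1)\subset(vmo)^*$.

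The decisive step is surjectivity, $R=(vmo)^*$. Weak-$*$ density of $R$ is cheap: its pre-annihilator is $\{g\in vmo:\langle g,f\rangle=0\ \forall f\in h^1\}$, and any such $g$ vanishes in $bmo=(h^1)^*$, so this pre-annihilator is $\{0\}$. Since $vmo$ is separable (being the closure of $C^\infty_c$), the Krein--Smulian theorem reduces weak-$*$ closedness of the convex set $R$ to weak-$*$ closedness of each ball $R\cap\{\|\cdot\|\le r\}$; density together with closedness then forces $R=(vmo)^*$. I expect the genuine work, and the main obstacle, to lie in this closedness statement, which is equivalent to the following compactness property of $h^1$ \emph{detached} from the duality we are proving: if $f_k\in h^1$ with $\|f_k\|_{h^1}\le M$ and $f_k\to f$ in $\mathcal S'(\bR^n)$, then $f\in h^1$ with $\|f\|_{h^1}\le CM$.

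I would establish this compactness directly from Goldberg's local atomic decomposition, avoiding any circular appeal to a dual characterization. Writing $f_k=\sum_j\lambda_{k,j}a_{k,j}$ with $\sum_j|\lambda_{k,j}|\le CM$, I would extract, by a diagonal argument over atoms subordinate to a fixed countable family of balls, together with weak-$*$ compactness of the coefficient sequences and of the normalized atoms in $L^q$, a limiting local atomic decomposition $f=\sum_j\lambda_j a_j$ with $\sum_j|\lambda_j|\le CM$; this places $f$ in $h^1$ with controlled norm and simultaneously identifies the limit functional on $vmo$ as $\Phi f$. Here the distinction between small-radius atoms (with the vanishing-moment condition) and large-radius atoms must be tracked through the limit, but each survives the extraction. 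Once this compactness is secured, the abstract argument closes and yields $(vmo)^*=h^1$ as asserted.
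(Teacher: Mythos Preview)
The paper does not prove Theorem~B at all: it is quoted verbatim from Dafni~\cite[Theorem~9]{Da} and used as a black box in the proof of Theorem~\ref{the main theorem}. There is therefore no ``paper's own proof'' to compare your proposal against.

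That said, your outline is essentially the standard route to such predual results (and indeed close in spirit to what Dafni does): build $\Phi\colon h^1\to(vmo)^*$ from the Goldberg pairing, show it is bounded below via a norming/approximation argument, and then prove surjectivity by combining weak-$*$ density with weak-$*$ closedness of bounded sets. The first two steps are fine as sketched.

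The part I would flag is your treatment of the compactness property
\[
f_k\in h^1,\ \|f_k\|_{h^1}\le M,\ f_k\to f\ \text{in}\ \mathcal S'\ \Longrightarrow\ f\in h^1,\ \|f\|_{h^1}\le CM.
\]
Your proposed proof via ``diagonal extraction of an atomic decomposition'' is vague and, as written, not convincing: atoms of $f_k$ live on arbitrary balls with no a~priori common discretization, so there is no obvious compact set of atoms over which to run a diagonal argument, and it is unclear how the extracted pieces reassemble to the distributional limit $f$. Fortunately this step has a one-line proof that avoids atoms entirely: for each fixed $x$ and $0<t\le1$, $\phi_t(x-\cdot)\in\mathcal S$, so $f_k*\phi_t(x)\to f*\phi_t(x)$; hence $\mathfrak m_\phi f(x)\le\liminf_k \mathfrak m_\phi f_k(x)$, and Fatou's lemma gives $\|f\|_{h^1}\le\liminf_k\|f_k\|_{h^1}\le M$. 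With this substitution your argument closes cleanly.
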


The following result is due to Hounie and Kapp \cite[Theorem 4.1]{HK}.

\begin{theoremc}\label{HK, Theorem 4.1}
Let $T\in \mathscr L^m_{\rho,\delta}$ with $0\leq \delta<1, 0< \rho\leq 1, \delta \leq \rho$ and $m \leq - n(1-\rho)/2$. Then, $T$ is bounded on $h^1(\mathbb R^n)$.
\end{theoremc}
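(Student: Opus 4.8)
The plan is to apply the atomic criterion of Proposition \ref{boundedness via atoms} with $q=2$, which reduces everything to two assertions: that $T$ is bounded on $L^2(\mathbb R^n)$, and that $\|Ta\|_{h^1}\le C$ uniformly over all $(h^1,2)$-atoms $a$. The first is classical: since $m\le -n(1-\rho)/2\le 0$ we have $\sigma\in S^0_{\rho,\delta}$ with $0\le\delta\le\rho\le 1$ and $\delta<1$, so the Calder\'on--Vaillancourt theorem gives $T\colon L^2\to L^2$. The entire weight of the proof then rests on the uniform atomic bound, which I would obtain by computing $\|Ta\|_{h^1}=\|\mathfrak m_\phi(Ta)\|_{L^1}$ after splitting $\mathbb R^n$ according to the distance to the ball $B=B(x_0,r)$ of the atom.

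For the local part I would integrate over $2B$ and use H\"older together with the $L^2$-boundedness of the Hardy--Littlewood maximal operator (which dominates $\mathfrak m_\phi$) and of $T$:
$$\int_{2B}\mathfrak m_\phi(Ta)\,dx\le |2B|^{1/2}\,\|\mathfrak m_\phi(Ta)\|_{L^2}\le C\,r^{n/2}\|Ta\|_{L^2}\le C\,r^{n/2}\|a\|_{L^2}\le C\,r^{n/2}|B|^{-1/2}=C.$$
This is the desired bound independently of $r\le 2$, and it uses only the $L^2$ theory, so it is valid for every atom regardless of cancellation.

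The complementary region $(2B)^c$ is where the kernel estimates of Proposition \ref{AH, Theorem 1.1} enter. On the genuinely distant part $\{|x-x_0|\ge 2\}$, and for all non-cancellation atoms ($1\le r\le 2$, where $|x-y|\ge r$ throughout the support), I would use the rapid decay of part (i): expanding $Ta(x)=\int_B[K(x,y)-K(x,x_0)]a(y)\,dy$ in the cancellation case and $Ta(x)=\int_B K(x,y)a(y)\,dy$ otherwise yields a pointwise bound $|Ta(x)|\le C_N|x-x_0|^{-N}$, and since $\mathfrak m_\phi$ only averages at scales $t\le 1$, combining this with the Schwartz decay of $\phi$ produces an integrable majorant, contributing $O(1)$. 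The remaining delicate zone is the intermediate annulus $\{2r\le|x-x_0|<2\}$ for a cancellation atom ($0<r<1$), where one must use both the vanishing moment of $a$ and the local smoothness from part (ii).

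This intermediate region is the main obstacle, and it is exactly where the exponent $m\le -n(1-\rho)/2$ is forced. The crude route---bounding $|K(x,y)-K(x,x_0)|\le r\sup_\zeta|\nabla_yK(x,\zeta)|\le C\,r\,|x-x_0|^{-(1+m+n)/\rho}$ via Proposition \ref{AH, Theorem 1.1}(ii) with $M=1$ and then summing over dyadic annuli $2^jr\le|x-x_0|<2^{j+1}r$---succeeds only when the resulting geometric series converges, i.e.\ when $n-(1+m+n)/\rho\ge -1$, which is precisely $m\le -(n+1)(1-\rho)$. This already covers the range of the paper's main theorem (and explains the constraint $m>-(n+1)$, needed for the $M=1$ estimate), but it is strictly stronger than the hypothesis $m\le -n(1-\rho)/2$ of the present statement. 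To reach the sharp threshold one cannot estimate each annulus pointwise; instead one must replace pointwise kernel control by an $L^2$ almost-orthogonality estimate capturing the oscillation of the symbol---decomposing $\sigma$ dyadically in frequency, $|\xi|\sim 2^k$, so that the $k$-th piece has $L^2$-operator norm $\sim 2^{km}$ with kernel concentrated at scale $2^{-k\rho}$, and then summing the $L^2(A_j)$-contributions by Cauchy--Schwarz. It is this half-power gain, in the spirit of Fefferman's $H^1$ endpoint for $S^m_{\rho,\delta}$, that produces the factor $n(1-\rho)/2$; I would also treat the very smoothing range separately, where the kernel is bounded and the estimate is softer. Carrying out the almost-orthogonality step carefully is the crux of the argument and the part I expect to be hardest.
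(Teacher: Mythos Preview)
The paper does not prove this statement at all: it is quoted verbatim as Theorem~C from Hounie and Kapp \cite[Theorem~4.1]{HK} and used as a black box in the proof of the main theorem. There is therefore no ``paper's own proof'' to compare your proposal against.

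That said, your outline is a reasonable sketch of how such a result is actually established, and you have correctly diagnosed the key difficulty. The local estimate and the far-away region via Proposition~\ref{AH, Theorem 1.1}(i) are routine, and your observation that the pure kernel estimate from Proposition~\ref{AH, Theorem 1.1}(ii) only reaches the threshold $m\le -(n+1)(1-\rho)$ (the hypothesis of Theorem~\ref{the main theorem}) rather than the sharp $m\le -n(1-\rho)/2$ is exactly right. The gap between those two thresholds is closed in \cite{HK} precisely by the kind of dyadic frequency decomposition and $L^2$ almost-orthogonality argument you describe, going back to Fefferman's work on $H^1$ for H\"ormander classes. What you have written is an accurate roadmap, but not yet a proof: the almost-orthogonality step you flag as ``the part I expect to be hardest'' is indeed the entire substance of the Hounie--Kapp argument, and carrying it out requires several pages of careful Littlewood--Paley estimates that you have not supplied. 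If your goal is to use Theorem~C as the present paper does, a citation suffices; if your goal is to reprove it, you will need to fill in that step in full.
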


\section{Proof of Theorem \ref{the main theorem}}

Here and in what follows, for any ball $B=B(x_0,r)$ and $k\in\bN$, we denote 
$$2^kB:=B(x_0, 2^k r).$$

In order to prove Theorem \ref{the main theorem}, we need the following three technical lemmas.

\begin{Lemma}\label{Ky2, Lemma 5.3}
Let $1\leq q <\infty$ and $0 \leq \theta<\infty$. Then,
\begin{enumerate}[(i)]
\item There exists a constant $C= C(q,\theta)>0$ such that
$$\Big( \frac{1}{|2^k B|} \int_{2^k B} |f(y) - f_B|^q \Big)^{1/q}\leq C k (1+ 2^k r)^{2\theta} \|f\|_{BMO_\theta}$$
for all $f\in BMO_\theta(\mathbb R^n)$,  $k\geq 1$ and for all balls $B= B(x_0,r)\subset \mathbb R^n$.
\item There exists a constant $C= C(q,\theta)>0$ such that
$$\Big( \frac{1}{|2^k B|} \int_{2^k B} |f(y) - f_B|^q \Big)^{1/q}\leq C \frac{k (1+ 2^k r)^{2\theta}}{\log\Big(e+ \frac{1}{2^k r}\Big)} \|f\|_{LMO_\theta}$$
for all $f\in LMO_\theta(\mathbb R^n)$,  $k\geq 1$ and for all balls $B= B(x_0,r)\subset \mathbb R^n$.
\end{enumerate}

\end{Lemma}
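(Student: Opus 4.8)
The plan is to reduce both parts to the classical John–Nirenberg-type doubling estimate, keeping track of the extra weights $(1+r)^\theta$ and the logarithmic factor. First I would prove the case $q=1$ and then upgrade to general $q$ by the reverse H\"older / John–Nirenberg inequality for $BMO_\theta$ (which, after localizing to the ball $2^kB$, reduces to the ordinary John–Nirenberg inequality since on a fixed ball the weight $(1+r_B)^\theta$ is comparable to a constant). So the heart of the matter is to control $\frac{1}{|2^kB|}\int_{2^kB}|f-f_B|$ in terms of $\|f\|_{BMO_\theta}$ (resp. $\|f\|_{LMO_\theta}$).

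For that, I would use the standard telescoping chain $B=2^0B\subset 2^1B\subset\cdots\subset 2^kB$ together with the triangle inequality
$$\frac{1}{|2^kB|}\int_{2^kB}|f-f_B|\,dy \le \frac{1}{|2^kB|}\int_{2^kB}|f-f_{2^kB}|\,dy + \sum_{j=0}^{k-1}|f_{2^{j+1}B}-f_{2^jB}|.$$
The first term on the right is at most $(1+2^kr)^\theta\|f\|_{BMO_\theta}$ by definition. For each summand, since $2^jB\subset 2^{j+1}B$ with $|2^{j+1}B|=2^n|2^jB|$, one has $|f_{2^{j+1}B}-f_{2^jB}|\le \frac{1}{|2^jB|}\int_{2^jB}|f-f_{2^{j+1}B}|\,dy\le 2^n(1+2^{j+1}r)^\theta\|f\|_{BMO_\theta}$. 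Summing over $j=0,\dots,k-1$ and bounding each factor $(1+2^{j+1}r)^\theta$ by $(1+2^kr)^\theta$ gives the full sum $\le C\,k\,(1+2^kr)^\theta\|f\|_{BMO_\theta}$, and combining with the first term yields the $q=1$ version of (i) with the weight power $\theta$; the power $2\theta$ claimed in the statement is then a comfortable over-estimate that also absorbs the loss incurred when passing from $q=1$ to general $q$ via John–Nirenberg on $2^kB$ (there the local $BMO$ seminorm of $f$ is $\le (1+2^kr)^\theta\|f\|_{BMO_\theta}$, contributing one more power of $(1+2^kr)^\theta$). For part (ii) the argument is identical except that each use of the $BMO_\theta$ bound is replaced by the $LMO_\theta$ bound, which for a ball of radius $2^jr$ reads $\frac{1}{|2^jB|}\int_{2^jB}|f-f_{2^jB}|\le \frac{(1+2^jr)^\theta}{\log(e+1/(2^jr))}\|f\|_{LMO_\theta}\le \frac{(1+2^kr)^\theta}{\log(e+1/(2^kr))}\|f\|_{LMO_\theta}$, since $r\mapsto \log(e+1/r)$ is decreasing. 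The rest of the telescoping and the John–Nirenberg upgrade go through verbatim, producing the factor $\frac{k(1+2^kr)^{2\theta}}{\log(e+1/(2^kr))}$.

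The only genuinely delicate point is the monotonicity bookkeeping: one must check that $1/\log(e+1/(2^jr))\le 1/\log(e+1/(2^kr))$ for $j\le k$ (true because $2^jr\le 2^kr$ and $t\mapsto\log(e+1/t)$ decreases), and that replacing each $(1+2^{j}r)^\theta$ by $(1+2^kr)^\theta$ is legitimate — both are immediate but need to be said so that the $k$ summands collapse to a clean $k\times(\text{largest term})$ bound. The John–Nirenberg step requires $f\restriction_{2^kB}\in BMO(2^kB)$ with seminorm controlled as above; this is standard but should be invoked explicitly. I expect no real obstacle beyond this routine weight-tracking; the statement is, in effect, the weighted/logarithmic analogue of the familiar inequality $\frac{1}{|2^kB|}\int_{2^kB}|f-f_B|\le Ck\|f\|_{BMO}$.
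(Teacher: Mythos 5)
Your argument is correct. Note that the paper does not prove this lemma at all: it simply cites \cite[Lemmas 5.3 and 6.6]{Ky2}, so your telescoping-plus-John--Nirenberg proof supplies a self-contained argument for a statement the authors import from elsewhere (and the argument in \cite{Ky2} is of the same standard type). Your bookkeeping is sound: the chain $|f_{2^{j+1}B}-f_{2^jB}|\leq 2^n(1+2^{j+1}r)^\theta\|f\|_{BMO_\theta}$, the monotonicity of $r'\mapsto (1+r')^\theta$ and of $r'\mapsto 1/\log(e+1/r')$, and the John--Nirenberg upgrade on $2^kB$ (whose local $BMO$ seminorm over subballs is at most $(1+2^kr)^\theta\|f\|_{BMO_\theta}$, resp.\ $(1+2^kr)^\theta\|f\|_{LMO_\theta}/\log(e+1/(2^kr))$) all check out. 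In fact your proof gives the exponent $\theta$ rather than $2\theta$ --- the John--Nirenberg step does not cost an extra power of $(1+2^kr)^\theta$, contrary to your cautious remark, since it only replaces the single oscillation term $\bigl(\frac{1}{|2^kB|}\int_{2^kB}|f-f_{2^kB}|^q\bigr)^{1/q}$ and the telescoping sum is handled separately --- so the stated inequality with $2\theta$ follows a fortiori.
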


\begin{Lemma}\label{the key lemma}
Let $1<q < \infty$ and $T\in \mathscr L^m_{\rho,\delta}$ with $0<\rho\leq 1$, $0\leq \delta<1, -n -1 < m\leq -(n+1) (1-\rho)$. Then, for each $N>0$, there exists $C= C(N) >0$ such that
$$\|Ta\|_{L^q(2^{k+1}B\setminus 2^k B)}\leq C \frac{2^{-c k}}{(1+ 2^k r)^N} |2^k B|^{1/q-1}$$
holds for all $(h^1,q)$-atom $a$ related to the ball $B= B(x_0,r)$ and for all $k =1,2,3,\ldots$, where $c= \min\{1, \frac{1+n+m}{\rho}\}$.
\end{Lemma}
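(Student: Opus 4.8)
The plan is to discard the operator $T$ in favour of its distribution kernel $K(x,y)$: since an $(h^1,q)$-atom $a$ is supported in $B=B(x_0,r)$ and, for $k\geq 1$, the annulus $2^{k+1}B\setminus 2^kB$ is disjoint from $B$, for $x$ in this annulus one has the absolutely convergent representation $Ta(x)=\int_B K(x,y)a(y)\,dy$. By H\"older's inequality and property (ii) in the definition of an atom, $\|a\|_{L^1}\leq\|a\|_{L^q}|B|^{1-1/q}\leq 1$, so the whole matter reduces to a pointwise bound for $|Ta(x)|$ on the annulus, multiplied at the end by $|2^{k+1}B|^{1/q}\asymp(2^kr)^{n/q}$. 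Two quantities control the estimates: the separation, which for $y\in B$ and $x\in 2^{k+1}B\setminus 2^kB$ satisfies $(2^k-1)r<|x-y|<(2^{k+1}+1)r$, so that $|x-y|\asymp 2^kr$; and the exponent $\gamma:=(n+1+m)/\rho$, which by the two hypotheses $-(n+1)<m\leq-(n+1)(1-\rho)$ lies in the range $0<\gamma\leq n+1$ (and $c=\min\{1,\gamma\}\leq 1$). I split according to whether $2^kr\geq 1$ or $2^kr<1$.

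Suppose first $2^kr\geq 1$. If $r\geq 1$, no moment condition is available, but then $|x-y|\geq(2^k-1)r\geq 2^{k-1}r\geq 1$, so Proposition \ref{AH, Theorem 1.1}(i) with $\alpha=\beta=0$ gives, for every $N'>0$, $|K(x,y)|\leq C(N')|x-y|^{-N'}\leq C(N')\,2^{N'}(2^kr)^{-N'}$, hence $|Ta(x)|\leq C(N')(2^kr)^{-N'}\leq C(N')\,r(2^kr)^{-N'}$. If $0<r<1$, then $\int a=0$, so $Ta(x)=\int_B\big(K(x,y)-K(x,x_0)\big)a(y)\,dy$, and the mean value theorem (the segment $[x_0,y]$ lies in $B$, away from $x$) bounds the integrand in absolute value by $r\sup_{|z-x_0|\leq r}\sup_{|\beta|=1}|D^\beta_yK(x,z)|$; here $|x-z|\geq(2^k-1)r\geq\tfrac{1}{2}\,2^kr\geq\tfrac{1}{2}$, and combining Proposition \ref{AH, Theorem 1.1}(i) (on $|x-z|\geq 1$) with part (ii) for $M=1$ (available because $1+m+n>0$, and yielding the bounded factor $|x-z|^{-\gamma}\leq 2^{\gamma}$ on $\tfrac{1}{2}\leq|x-z|<1$) one obtains $\sup_{|\beta|=1}|D^\beta_yK(x,z)|\leq C(N')(1+|x-z|)^{-N'}\leq C(N')\,2^{N'}(2^kr)^{-N'}$, so again $|Ta(x)|\leq C(N')\,r(2^kr)^{-N'}$. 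In either case, multiplying by $|2^{k+1}B|^{1/q}\asymp(2^kr)^{n/q}$ and choosing $N'=n+N+1$ turns the bound into $C(N)\,r(2^kr)^{n/q-n-N-1}=C(N)\,2^{-k}(2^kr)^{n/q-n-N}$, which, using $2^kr\geq 1$ (so $(1+2^kr)^{-N}\asymp(2^kr)^{-N}$) and $|2^kB|^{1/q-1}\asymp(2^kr)^{n/q-n}$, is $\asymp C(N)\,2^{-k}(1+2^kr)^{-N}|2^kB|^{1/q-1}$, stronger than required since $2^{-k}\leq 2^{-ck}$.

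Suppose now $2^kr<1$; then automatically $r<1$, so $\int a=0$ and the mean value argument applies verbatim, now with $|x-z|\asymp 2^kr$. This time Proposition \ref{AH, Theorem 1.1}(ii) with $M=1$ (again legitimate since $1+m+n>0$) gives $\sup_{|\beta|=1}|D^\beta_yK(x,z)|\leq C|x-z|^{-\gamma}\leq C\,2^{\gamma}(2^kr)^{-\gamma}$, whence $|Ta(x)|\leq C\,r(2^kr)^{-\gamma}$ and $\|Ta\|_{L^q(2^{k+1}B\setminus 2^kB)}\leq C\,(2^kr)^{n/q}r(2^kr)^{-\gamma}$. Since here $1\leq 1+2^kr<2$, the target is $\asymp C(N)\,2^{-ck}(2^kr)^{n/q-n}$, so what remains is the elementary inequality $r(2^kr)^{n-\gamma}\leq 2^{-k}$; writing its left side as $2^{k(n-\gamma)}r^{\,n-\gamma+1}$ and using $r<2^{-k}$ together with $n-\gamma+1\geq 0$ — this last being exactly the content of the hypothesis $m\leq-(n+1)(1-\rho)$ — we get $2^{k(n-\gamma)}r^{\,n-\gamma+1}\leq 2^{k(n-\gamma)}\,2^{-k(n-\gamma+1)}=2^{-k}\leq 2^{-ck}$, and the proof is complete.

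The two hypotheses on $m$ thus enter at complementary and genuinely necessary points, and getting their bookkeeping right is the real content: $m>-(n+1)$ is precisely what makes the first-order kernel-derivative estimate in Proposition \ref{AH, Theorem 1.1}(ii) available (it requires $1+m+n>0$), while $m\leq-(n+1)(1-\rho)$, i.e.\ $\gamma\leq n+1$, is precisely what keeps the exponent of $r$ nonnegative in the small-radius regime. The only point needing care is to produce, uniformly in $k$, simultaneously the geometric decay $2^{-ck}$ and the polynomial decay $(1+2^kr)^{-N}$ in the radius; the split at $2^kr=1$ achieves this, the far regime absorbing both gains into the freely chosen exponent $N'$ of Proposition \ref{AH, Theorem 1.1}(i) and the near regime trading the missing radial decay (the factor $(1+2^kr)^{-N}$ being harmless there) for the required power count in $r$.
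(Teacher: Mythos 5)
Your proof is correct and follows essentially the same route as the paper's: the kernel representation off the support of the atom, Proposition~\ref{AH, Theorem 1.1}(i) with a freely chosen decay exponent in the far regime, the moment condition combined with Proposition~\ref{AH, Theorem 1.1}(ii) for $M=1$ in the near regime, and the hypothesis $m\leq -(n+1)(1-\rho)$ entering precisely through the elementary inequality $r(2^kr)^{\,n-(n+1+m)/\rho}\leq 2^{-k}$. The one place you genuinely diverge is the intermediate regime ($0<r<1$ but $2^kr\geq 1$, the paper's case~(a)): you keep the factor $r$ produced by the cancellation and the mean value theorem and extract the geometric decay from $r\cdot(2^kr)^{-1}=2^{-k}$, whereas the paper omits that factor and asserts $(2^kr)^{-(1+n+m)/\rho}\leq C2^{-ck}$ directly from $(2^k-1)r\geq 1$ --- an implication that fails in general (take $r\asymp 2^{-k}$, so that $2^kr\asymp 1$ stays bounded while $2^{-ck}\to 0$). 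So your bookkeeping in that case is not only correct but actually repairs a small gap in the published argument, and the rest of your write-up (the legitimacy of $M=1$ from $m>-(n+1)$, the nonnegativity of the exponent of $r$ from $m\leq-(n+1)(1-\rho)$, the treatment of $r=1$ without cancellation) is accurate.
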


\begin{Lemma}\label{the key lemma added}
Let $T\in \mathscr L^m_{\rho,\delta}$ with $0<\rho\leq 1$, $0\leq \delta<1, -n -1 < m\leq -(n+1) (1-\rho)$. Then the following two statements hold:
\begin{enumerate}[(i)]
\item If $b\in BMO_\theta(\mathbb R^n)$ for some $\theta\in [0,\infty)$, then there exists a constant $C>0$ such that for every $(h^1,2)$-atom $a$ related to the ball $B=B(x_0,r)$,
$$\|(b-b_B)Ta\|_{L^1}\leq C\|b\|_{BMO_\theta}.$$
\item If $b\in LMO_\theta(\mathbb R^n)$ for some $\theta\in [0,\infty)$, then there exists a constant $C>0$ such that for every $(h^1,2)$-atom $a$ related to the ball $B=B(x_0,r)$,
$$\log(e+ 1/r)\|(b-b_B)Ta\|_{L^1}\leq C\|b\|_{LMO_\theta}.$$
\end{enumerate}
\end{Lemma}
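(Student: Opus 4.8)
The plan is to estimate $\|(b-b_B)Ta\|_{L^1}$ by splitting $\bR^n$ into the "local" region $2B$ and the annuli $A_k := 2^{k+1}B\setminus 2^kB$ for $k\geq 1$, and on each piece use Hölder's inequality together with the two previous lemmas. On $2B$ I would apply Hölder with exponents $2$ and $2$: $\|(b-b_B)Ta\|_{L^1(2B)}\leq \|b-b_B\|_{L^2(2B)}\,\|Ta\|_{L^2(2B)}$. The first factor is controlled by $|2B|^{1/2}$ times the $BMO_\theta$ (resp. $LMO_\theta$) oscillation bound from Lemma \ref{Ky2, Lemma 5.3} with $k=1$ (or directly from the definition, since for $LMO_\theta$ one gains the factor $1/\log(e+1/r)$ which is exactly what is needed in part (ii)); the second factor is at most $\|Ta\|_{L^2}\leq C\|a\|_{L^2}\leq C|B|^{-1/2}$ by $L^2$-boundedness of $T\in\mathscr L^m_{\rho,\delta}$ (which holds since $m\leq 0$). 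Since $r\leq 2$, the volume and radius factors combine to give the desired bound; in case (ii), because $r\leq 2$ one has $\log(e+1/r)\cdot\frac{1}{\log(e+1/(2r))}\leq C$, closing that piece.

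For the annular pieces, on $A_k$ I would again use Hölder: $\|(b-b_B)Ta\|_{L^1(A_k)}\leq \|b-b_B\|_{L^{q'}(2^{k+1}B)}\,\|Ta\|_{L^q(A_k)}$ for a suitable fixed $q>1$ (say $q=2$, so $q'=2$). The second factor is exactly what Lemma \ref{the key lemma} estimates: it is at most $C\,2^{-ck}(1+2^kr)^{-N}|2^kB|^{1/q-1}$ for any $N>0$, where $c=\min\{1,(1+n+m)/\rho\}>0$ by the hypothesis $m>-(n+1)$. The first factor is handled by Lemma \ref{Ky2, Lemma 5.3}: in case (i) it is $\leq Ck(1+2^kr)^{2\theta}\|b\|_{BMO_\theta}|2^{k+1}B|^{1/q'}$, and in case (ii) it is $\leq C\frac{k(1+2^kr)^{2\theta}}{\log(e+1/(2^kr))}\|b\|_{LMO_\theta}|2^{k+1}B|^{1/q'}$. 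Multiplying, the volume factors $|2^kB|^{1/q-1}|2^{k+1}B|^{1/q'}$ combine (up to a constant) to $1$ since $1/q+1/q'=1$, leaving $\sum_{k\geq1} k\,2^{-ck}(1+2^kr)^{2\theta-N}$ (times the extra $1/\log(e+1/(2^kr))$ in case (ii)). Choosing $N:=2\theta+1$ makes $(1+2^kr)^{2\theta-N}\leq 1$, so the series $\sum_k k\,2^{-ck}$ converges and we get the bound $C\|b\|_{BMO_\theta}$ in (i).

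For part (ii) the remaining point is the factor $\log(e+1/r)$ on the left. On each annulus I have gained a factor $1/\log(e+1/(2^kr))$, and since $r\leq 2$ and $k\geq 0$ one has $\log(e+1/r)\leq \log(e+1/(2^kr))+Ck$ (because $1/r\leq 2^k/(2^kr)$ gives $\log(e+1/r)\leq \log(e+2^k/(2^kr))\leq Ck+\log(e+1/(2^kr))$ when $2^kr$ is small, and both are bounded when $2^kr\gtrsim1$). Hence $\log(e+1/r)\cdot\frac{1}{\log(e+1/(2^kr))}\leq C(1+k)$, which only inserts an extra polynomial factor $k$ into the sum $\sum_k k^2\,2^{-ck}(1+2^kr)^{2\theta-N}$ — still convergent for the same choice of $N$. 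This yields $\log(e+1/r)\|(b-b_B)Ta\|_{L^1}\leq C\|b\|_{LMO_\theta}$.

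The main obstacle is the bookkeeping of the three competing factors on each annulus — the exponential decay $2^{-ck}$ from $Ta$, the polynomial growth $(1+2^kr)^{2\theta}$ from the oscillation of $b$ (whose exponent depends on the unknown $\theta$), and the logarithmic gain — and verifying that one can choose $N$ large enough (depending on $\theta$) so that everything sums; this is exactly why Lemma \ref{the key lemma} is stated with an arbitrary $N>0$. The local piece $2B$ is routine once one notices $r\leq 2$ forces all the radius-dependent factors to be harmless there.
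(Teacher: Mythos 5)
Your proof is correct and follows essentially the same route as the paper's: the same decomposition of $\mathbb R^n$ into $2B$ and the annuli $2^{k+1}B\setminus 2^kB$, H\"older with exponents $2$ and $2$, Lemma \ref{Ky2, Lemma 5.3} for the oscillation of $b$, and Lemma \ref{the key lemma} with $N$ chosen large relative to $2\theta$ so that the series $\sum_k k\,2^{-ck}$ converges. The only (harmless) difference is in part (ii): you compare logarithms via $\log(e+1/r)\le Ck+\log\bigl(e+\tfrac{1}{2^kr}\bigr)$, which costs only a polynomial factor $k$, whereas the paper uses $\log(e+1/r)\le C\,2^{ck/2}\log\bigl(e+\tfrac{1}{2^kr}\bigr)$ and absorbs the resulting exponential factor into $2^{-ck}$; your variant is in fact slightly sharper.
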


The proof of Lemma \ref{Ky2, Lemma 5.3} can be found in \cite[Lemmas 5.3 and 6.6]{Ky2} as the special cases. Now let us give the proofs for Lemmas \ref{the key lemma} and \ref{the key lemma added}.

\begin{proof}[Proof of Lemma \ref{the key lemma}]
If $1<r\leq 2$, then for every $x\in 2^{k+1}B\setminus 2^k B$ and $y\in B= B(x_0,r)$, we have $|x-y|\geq |x-x_0| - |y-x_0|\geq 2^k r - r\geq 1$. Hence, by (i) of Proposition \ref{AH, Theorem 1.1} and the H\"older inequality,
\begin{eqnarray*}
|Ta(x)| = \left| \int_{\mathbb R^n} K(x,y) a(y) dy\right| &\leq& \int_B |K(x,y)| |a(y)| dy\\
&\leq& C \int_B \frac{1}{|x-y|^{N+n+1}} |a(y)| dy\\
&\leq& C \frac{1}{|x-x_0|^{N+n +1}} \|a\|_{L^q} |B|^{1-1/q}\\
&\leq& C \frac{1}{(2^k r)^{N+n+1}}
\end{eqnarray*}
for all $x\in 2^{k+1}B\setminus 2^k B$. This implies that
\begin{eqnarray*}
\|Ta\|_{L^q(2^{k+1}B\setminus 2^k B)} &\leq& C \frac{1}{(2^k r)^{N+n+1}} |2^{k+1}B\setminus 2^k B|^{1/q}\\
&\leq& C \frac{1}{2^k r}\frac{1}{(1+ 2^k r)^N} |2^k B|^{1/q-1}\\
&\leq& C \frac{2^{-ck}}{(1+ 2^k r)^N} |2^k B|^{1/q-1}.
\end{eqnarray*}

In the case of $0<r\leq 1$, we have $\int_B a(y)dy=0$. Thus, for every $x\in 2^{k+1}B\setminus 2^k B$, from $1+n+m>0$, Proposition \ref{AH, Theorem 1.1}(ii) yields
\begin{eqnarray}\label{a point estimate}
|Ta(x)| = \left| \int_{\mathbb R^n} K(x,y) a(y) dy\right| &\leq& \int_B |K(x,y) - K(x,x_0)| |a(y)| dy\nonumber\\
&\leq& C \int_B \frac{|y-x_0|}{|x-x_0|^{\frac{1+n+m}{\rho}}} |a(y)| dy\nonumber\\
&\leq& C \frac{r}{(2^k r)^{\frac{1+n+m}{\rho}}},
\end{eqnarray}
where we used the fact that $|x-\xi|\sim |x-x_0|$ if $\xi\in B$. Let us now consider the following two cases:
\begin{enumerate}[(a)]
\item If $(2^k -1) r\geq 1$, then, by using Proposition \ref{AH, Theorem 1.1}(i), it is similar to the case $1<r\leq 2$ that for every $x\in 2^{k+1}B\setminus 2^k B$,
\begin{eqnarray*}
|Ta(x)| &\leq& C \frac{1}{(2^k r)^{N + n +\frac{1+n+m}{\rho}}}\\
&\leq& C \frac{2^{-ck}}{(2^k r)^{N+n}}.
\end{eqnarray*}
Therefore,
\begin{eqnarray*}
\|Ta\|_{L^q(2^{k+1}B\setminus 2^k B)} &\leq& C \frac{2^{-ck}}{(2^k r)^{N+n}} |2^{k+1}B\setminus 2^k B|^{1/q}\\
&\leq& C \frac{2^{-ck}}{(1+ 2^k r)^N} |2^k B|^{1/q-1}.
\end{eqnarray*}

\item If $(2^k -1) r< 1$, then since $m\leq -(n+1)(1-\rho)$, (\ref{a point estimate}) yields
\begin{eqnarray*}
\|Ta\|_{L^q(2^{k+1}B\setminus 2^k B)} &\leq& C \frac{r}{(2^k r)^{\frac{1+n+m}{\rho}}} |2^{k+1}B\setminus 2^k B|^{1/q}\\
&\leq& C \frac{1}{2^k} \frac{1}{(2^k r)^n} |2^k B|^{1/q}\\
&\leq& C \frac{2^{-ck}}{(1+ 2^k r)^N} |2^k B|^{1/q-1},
\end{eqnarray*}
which ends the proof of Lemma \ref{the key lemma}.
\end{enumerate}

\end{proof}

\begin{proof}[Proof of Lemma \ref{the key lemma added}]
(i) Since $r\leq 2$, by the H\"older inequality, the $L^2$-boundedness of $T$, Lemmas \ref{Ky2, Lemma 5.3}(i) and \ref{the key lemma}, we get
\begin{eqnarray*}
&& \|(b-b_B) Ta\|_{L^1}\\
&=&  \|(b-b_B) Ta\|_{L^1(2B)} +\sum_{k=1}^\infty \|(b-b_B) Ta\|_{L^1(2^{k+1}B \setminus 2^k B)}\\
&\leq&  \|b-b_B\|_{L^2(2B)} \|Ta\|_{L^2(2B)} + \sum_{k=1}^\infty \|b-b_B\|_{L^2(2^{k+1}B \setminus 2^k B)} \|Ta\|_{L^2(2^{k+1}B \setminus 2^k B)}\\
&\leq& C  |2B|^{1/2}\|b\|_{BMO_\theta} \|a\|_{L^2} +\\
&& + C \sum_{k=1}^\infty (k+1)(1 + 2^{k+1}r)^{2\theta}|2^{k+1}B|^{1/2}\|b\|_{BMO_\theta} \frac{2^{-ck}}{(1+ 2^k r)^{2\theta}}|2^k B|^{-1/2}\\
&\leq& C \|b\|_{BMO_\theta} + C \sum_{k=1}^\infty k 2^{-c k} \|b\|_{BMO_\theta}\\
&\leq& C \|b\|_{BMO_\theta},
\end{eqnarray*}
where $c= \min\{1, \frac{1+n+m}{\rho}\}>0$.

(ii) Setting $\varepsilon = c/2$ with $c= \min\{1, \frac{1+n+m}{\rho}\}>0$, it is easy to check that there exists a positive constant $C=C(\varepsilon)$ such that
$$\log(e+ kt)\leq C k^{\varepsilon} \log(e + t)$$
for all $k\geq 1, t>0$. As a consequence, we get 
$$
\log\Big( e+ \frac{1}{r} \Big) \leq C 2^{\varepsilon k} \log\Big( e+ \frac{1}{2^k r} \Big)
$$
for all $k\geq 1$. This, together with the H\"older inequality, Lemmas \ref{Ky2, Lemma 5.3}(i) and \ref{the key lemma}, gives
\begin{eqnarray*}
&&\log(e + 1/r) \|(b-b_B) Ta\|_{L^1}\\
&=& \log(e + 1/r) \|(b-b_B) Ta\|_{L^1(2B)} +\sum_{k=1}^\infty \log(e + 1/r) \|(b-b_B) Ta\|_{L^1(2^{k+1}B \setminus 2^k B)}\\
&\leq& \log(e + 1/r) \|b-b_B\|_{L^2(2B)} \|Ta\|_{L^2(2B)} +\\
&& + \sum_{k=1}^\infty \log(e + 1/r) \|b-b_B\|_{L^2(2^{k+1}B \setminus 2^k B)} \|Ta\|_{L^2(2^{k+1}B \setminus 2^k B)}\\
&\leq& C \log(e + 1/r) \frac{|2B|^{1/2}}{\log(e + 1/(2r))}\|b\|_{LMO_\theta} \|a\|_{L^2} +\\
&& + C \sum_{k=1}^\infty 2^{\varepsilon k} \log\Big( e+ \frac{1}{2^k r} \Big)\frac{(k+1)(1 + 2^{k+1}r)^{2\theta}}{\log\Big(e+ \frac{1}{2^{k+1}r}\Big)}|2^{k+1}B|^{1/2}\|b\|_{LMO_\theta} \frac{2^{-ck}}{(1+ 2^k r)^{2\theta}}|2^k B|^{-1/2}\\
&\leq& C \|b\|_{LMO_\theta} + C \sum_{k=1}^\infty k 2^{-\varepsilon k} \|b\|_{LMO_\theta}\\
&\leq& C \|b\|_{LMO_\theta},
\end{eqnarray*}
where we used the facts that $r\leq 2$ and $c=2\varepsilon$.

\end{proof}

We are now ready to prove the main theorem.

\begin{proof}[\bf Proof of Theorem \ref{the main theorem}] 
 (i) Assume that $b\in LMO_\theta(\mathbb R^n)$ for some $\theta\in [0,\infty)$. By Proposition \ref{boundedness via atoms}, it is sufficient to show that
$$
\|[b,T](a)\|_{h^1}\leq C \|b\|_{LMO_\theta}
$$
holds for all $(h^1,2)$-atoms $a$ related to the ball $B=B(x_0,r)$. To this ends, by Theorem C, we need to prove that
\begin{equation}\label{proof of the main theorem 1}
\|(b-b_B)a\|_{h^1}\leq C \|b\|_{LMO_\theta}
\end{equation}
and
\begin{equation}\label{proof of the main theorem 2}
\|(b-b_B)Ta\|_{h^1}\leq C \|b\|_{LMO_\theta}.
\end{equation}
Thanks to Theorem B, to establish (\ref{proof of the main theorem 1}) and (\ref{proof of the main theorem 2}), it is sufficient to prove that
$$\|f(b- b_B)a\|_{L^1}\leq C \|b\|_{LMO_\theta} \|f\|_{bmo}$$
and
$$\|f (b- b_B)Ta \|_{L^1}\leq C \|b\|_{LMO_\theta}\|f\|_{bmo}$$
for all $f\in C^\infty_c(\mathbb R^n)$. Indeed, since $f\in C^\infty_c(\mathbb R^n)$, it is well-known that $|f_B|\leq C \log(e+ 1/r) \|f\|_{bmo}$. Therefore, by the H\"older inequality and Lemma \ref{Ky2, Lemma 5.3}(ii), 
\begin{eqnarray*}
&&\|f (b- b_B)a\|_{L^1}\\
& \leq& \|(f-f_B)(b-b_B)a\|_{L^1} + \log(e+ 1/r) \|f\|_{bmo} \|(b-b_B)a\|_{L^1}\\
&\leq& \|(f-f_B)\chi_B\|_{L^4} \|(b-b_B)\chi_B\|_{L^4}\|a\|_{L^2}  +  \log(e+ 1/r) \|f\|_{bmo} \|(b-b_B)\chi_B\|_{L^2}\|a\|_{L^2}\\
&\leq& C |B|^{1/4} \|f\|_{BMO} |B|^{1/4} \|b\|_{LMO_\theta} |B|^{-1/2} + C \|f\|_{bmo} |B|^{1/2} \|b\|_{LMO_\theta} |B|^{-1/2} \\
&\leq& C \|b\|_{LMO_\theta} \|f\|_{bmo},
\end{eqnarray*}
where we used the facts that supp $a\subset B$ and $r\leq 2$.

By the H\"older inequality, the $L^2$-boundedness of $T$ and Lemmas \ref{Ky2, Lemma 5.3}(ii) and \ref{the key lemma},
\begin{eqnarray*}\label{proof of the main theorem 3}
&&\|(f-f_B)(b- b_B) Ta\|_{L^1} \\
&=& \|(f-f_B)(b- b_B) Ta\|_{L^1(2B)} + \sum_{k=1}^\infty \|(f-f_B)(b- b_B) Ta\|_{L^1(2^{k+1}B \setminus 2^k B)}\\
&\leq& \|f-f_B\|_{L^4(2B)}\|b-b_B\|_{L^4(2B)} \|Ta\|_{L^2} +\\
&& + \sum_{k=1}^\infty \|f-f_B\|_{L^4(2^{k+1}B \setminus 2^k B)}\|b-b_B\|_{L^4(2^{k+1}B \setminus 2^k B)} \|Ta\|_{L^2(2^{k+1}B\setminus 2^k B)}\\
&\leq& C |2B|^{1/4}\|f\|_{BMO} |2B|^{1/4}\|b\|_{LMO_\theta} \|a\|_{L^2}\\
&& + C \sum_{k=1}^\infty (k+1)|2^{k+1} B|^{1/4}\|f\|_{BMO} \frac{(k+1)(1+ 2^{k+1}r)^{2\theta}}{\log(e+ \frac{1}{2^{k+1}r})}|2^{k+1} B|^{1/4}\|b\|_{LMO_\theta}\frac{2^{-ck}}{(1+ 2^k r)^{2\theta}}|2^k B|^{-1/2}\\
&\leq& C \|f\|_{BMO}\|b\|_{LMO_\theta},
\end{eqnarray*}
where we used the facts that $r\leq 2$ and $c=\min\{1,\frac{1+n+m}{\rho}\}>0$. Combining this with (ii) of Lemma \ref{the key lemma added} allow to conclude that
\begin{eqnarray*}
\|f(b-b_B)Ta\|_{L^1}&\leq& \|(f-f_B)(b-b_B)Ta\|_{L^1} + |f_B|\|(b-b_B)Ta\|_{L^1}\\
&\leq& C \|b\|_{LMO_\theta} \|f\|_{BMO} + C \log(e+1/r) \|f\|_{bmo}\|(b-b_B) Ta\|_{L^1}\\
&\leq& C \|b\|_{LMO_\theta} \|f\|_{bmo},
\end{eqnarray*}
which completes the proof of (i).

(ii) By a symbol calculation (cf. \cite[Proposition 0.3.B]{Tay}), there exists $\sigma^*\in S^m_{\rho,\delta}$ such that $T$ is the conjugate operator of $T_{\sigma^*}$ whose symbol is $\sigma^*$. So (ii) can be viewed as a consequence of (i). This ends the proof of Theorem \ref{the main theorem}.

\end{proof}

\section{Appendix}

The following theorem yields the converse of Theorem \ref{the main theorem}. Although, it can be followed from Theorem 1.2 of Yang, Wang and Chen \cite{YWC}, however we also would like to give a proof here for completeness. Also, it should be pointed out that our approach is different from that of Yang, Wang and Chen.

\begin{theorem}
Let $b$ be a function in $BMO_\infty(\bR^n)$. Suppose that $[b,T]$ is bounded on $h^1(\bR^n)$ for all $T\in \mathscr L^{m}_{\rho,\delta}$ with $0\leq \delta<1, 0< \rho\leq 1, \delta \leq \rho$ and $-(n+1)< m \leq - (n+1)(1-\rho)$. Then, $b\in LMO_\infty(\bR^n)$.
\end{theorem}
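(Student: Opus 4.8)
The plan is to run a duality/averaging argument: feed suitably chosen test functions through the bounded commutator $[b,T]$, landing in the dual pairing between $h^1(\bR^n)$ and $bmo(\bR^n)$, and then reverse-engineer from the resulting inequalities the $LMO_\theta$ control of the local oscillations of $b$. The key observation is that we are free to choose the symbol: for a fixed ball $B=B(x_0,r)$ I would like to pick $T$ so that $Ta$ agrees, on $B$ itself, with a controlled bump adapted to $B$ (for instance built from a fixed $S^{m}_{\rho,\delta}$ operator whose kernel is positive and bounded below near the diagonal, after suitable rescaling), while $b_B$ times that bump can be subtracted off. Then $[b,T]a = (b-b_B)Ta - T((b-b_B)a)$, and testing against a bounded function $g$ that equals $\mathrm{sgn}(b-b_B)$ on $B$ (regularized to lie in $C^\infty_c$ with $\|g\|_{bmo}\le C$) produces, on the left, essentially $\frac{1}{|B|}\int_B |b-b_B|$ against the pairing, and on the right the operator norm of $[b,T]$ times $\|g\|_{bmo}$.

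In more detail, the steps I would carry out are: (1) Fix a model operator $T_0\in\mathscr L^{m}_{\rho,\delta}$ whose kernel $K_0(x,y)$ satisfies $K_0(x,y)\ge c_0|x-y|^{-(n+m)/\rho}$ (or is $\equiv 1$ in a neighborhood of the diagonal after truncation) and use dilations/modulations of $T_0$, which stay in the class, to produce for each ball $B$ an operator $T_B$ with $T_B a_B \gtrsim \chi_B$ for the normalized atom $a_B = |B|^{-1}\chi_B$ (when $r\ge 1$, no cancellation needed; when $r<1$ one must work with a mean-zero atom and instead exploit that $T_B$ reproduces a bump modulo small error). (2) Write $[b,T_B]a_B = (b-b_B)T_Ba_B - T_B\big((b-b_B)a_B\big)$ and note the second term has $h^1$-norm $\le \|[b,T_B]\|_{h^1\to h^1}\|a_B\|_{h^1} + \|(b-b_B)T_Ba_B\|_{h^1}$ is \emph{not} immediately useful, so instead pair directly: for $g\in C^\infty_c$ with $g=\mathrm{sgn}(b-b_B)$ on $B$, $|g|\le 1$, $\|g\|_{bmo}\le C$, compute $\langle [b,T_B]a_B,\, g\rangle$ using the $h^1$–$bmo$ duality, bounding it by $C\|[b,T_B]\|_{h^1\to h^1}\|g\|_{bmo}$. (3) Expand the left side: $\langle (b-b_B)T_Ba_B, g\rangle - \langle (b-b_B)a_B, T_B^* g\rangle$; the first term is $\gtrsim \frac{1}{|B|}\int_B|b-b_B|$ by the lower bound on $T_B a_B$ and the choice of $g$, while the second term is $O(\|b\|_{BMO_\theta}(1+r)^\theta)$ via Hölder, the $L^2$-boundedness of $T_B^*$, and Lemma \ref{Ky2, Lemma 5.3}. (4) This gives $\frac{1}{|B|}\int_B|b-b_B| \le C\big(\|[b,T]\|+\|b\|_{BMO_\theta}\big)(1+r)^{N\theta}$, which recovers $BMO_\infty$ but not the logarithmic gain — so the crucial refinement is that the \emph{norm in $h^1$}, not just $L^1$, is being controlled, and the $h^1$-quasinorm of a function supported on $B$ with $0<r<1$ and a prescribed nonzero mean carries an extra factor $\log(e+1/r)$ compared to its $L^1$ norm (this is the standard fact that a non-cancellative $L^1$-normalized bump on a small ball costs $\log(e+1/|B|)$ in $h^1$). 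Tracking that factor through the computation upgrades $(1+r)^\theta\frac{1}{|B|}\int_B|b-b_B|$ to $\frac{\log(e+1/r)}{(1+r)^{\theta'}}\cdot\frac{1}{|B|}\int_B|b-b_B|\le C$, i.e. $b\in LMO_{\theta'}$.

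The main obstacle, and where the argument needs care, is Step (1)–(3) in the regime $0<r<1$, where atoms must have vanishing integral: one cannot simply test with $T_B$ applied to $|B|^{-1}\chi_B$, so one must either (a) use a difference of two bumps at scale $r$ separated by distance $\sim 1$, arranging that $T_B$ of this mean-zero object is still $\gtrsim \chi_B$ in absolute value on $B$ while the "far" bump contributes negligibly (using Proposition \ref{AH, Theorem 1.1}(i) for the decay of $K_0$ at distances $\ge 1$), or (b) work directly with the reproducing property $\phi*\phi_t$ built into the $h^1$ maximal function and realize the needed bump as $T_B a$ for a genuine atom $a$. The bookkeeping of the $\log(e+1/r)$ factor — ensuring it appears on the correct side with the correct sign — is the delicate point; everything else (the $S^{m}_{\rho,\delta}$ membership of the dilated model operators, the $L^2$ and $h^1$ mapping properties, the Hölder estimates against $BMO_\theta$) is routine given Proposition \ref{AH, Theorem 1.1}, Theorem C, and Lemma \ref{Ky2, Lemma 5.3}.
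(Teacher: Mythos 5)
Your plan correctly senses that the logarithm must come from $h^1$--$bmo$ duality and from the fact that a non-cancellative bump on a small ball costs $\log(e+1/r)$ in $h^1$, but as written there is a genuine gap: you never produce a function to which that fact applies. Such a function must (i) be supported in $B$, (ii) have integral comparable to $\int_B|b-b_B|\,dx$ (with the absolute value), and (iii) have $h^1$ norm bounded independently of $B$. Your candidate objects each fail one of these: $[b,T_B]a_B$ and $(b-b_B)T_Ba_B$ are not supported in $B$ and their integrals do not see $|b-b_B|$; and once you regularize $g=\mathrm{sgn}(b-b_B)$ so that $\|g\|_{bmo}\le C$, the pairing $\langle\cdot,g\rangle$ can only yield a bound on $\frac{1}{|B|}\int_B|b-b_B|\,dx$ with no logarithmic gain --- i.e.\ exactly the $BMO$ control you already assumed. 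To get the logarithm you would need to pair with a $bmo$ function of height $\log(1/r)$ on $B$ \emph{and} simultaneously carry the sign of $b-b_B$, and the product of those two is not a uniformly bounded family in $bmo$. Your step (1) is also left unexecuted: building, for every ball, an operator $T_B$ with a kernel lower bound while keeping the symbol estimates uniform in $S^m_{\rho,\delta}$ is a nontrivial construction, and the paper avoids it entirely.

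The paper resolves both difficulties with two moves absent from your outline. First, the sign is put into the atom rather than into the dual test function: with $f=\mathrm{sgn}(b-b_B)$ and $a=(2|B|)^{-1}(f-f_B)\chi_B$ (a genuine $(h^1,2)$-atom), the function $(b-b_B)a$ is supported in $B$ and has integral exactly $\frac{1}{2|B|}\int_B|b-b_B|\,dx$; the logarithm is then extracted by pairing with the fixed bump $g_{x_0,r}$, equal to $\log(1/r)$ on $B$ and to $\log(1/|x-x_0|)$ for $r<|x-x_0|\le 1$, whose $bmo$ norm is uniformly bounded. Second, the required upper bound $\|(b-b_B)a\|_{h^1}\le C(\|b\|_{BMO_\theta}+\sum_j\|[b,r_j]\|_{h^1\to h^1})$ comes from applying the hypothesis to the local Riesz transforms $r_j\in\mathscr L^0_{1,0}$ and invoking Goldberg's characterization $\|h\|_{h^1}\approx\|h\|_{L^1}+\sum_j\|r_j h\|_{L^1}$: one writes $r_j((b-b_B)a)=(b-b_B)r_ja-[b,r_j](a)$ and controls the first term in $L^1$ by Lemma \ref{the key lemma added}(i). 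Without these two steps your argument stops at $b\in BMO_\infty$ and does not reach $LMO_\infty$.
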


\begin{proof}
Assume that $b$ is a function in $BMO_\theta(\mathbb R^n)$, for some $\theta\in [0,\infty)$, such that $[b,T]$ is bounded on $h^1(\mathbb R^n)$ for all $T\in \mathscr L^m_{\rho,\delta}$ with  $0\leq \delta<1, 0< \rho\leq 1, \delta \leq \rho$ and $-(n+1)< m \leq - (n+1)(1-\rho)$. Then, for any $r_j, j=1,2,\ldots,n$, the classical local Riesz transform of Goldberg (see \cite{Go} for details), the commutator $[b,r_j]$ is bounded on $h^1(\mathbb R^n)$ since $r_j\in \mathscr L^0_{1,0}$ (e.g. \cite{HK}). Therefore, for every $(h^1,2)$-atom $a$ related to the ball $B$, (i) of Lemma \ref{the key lemma added} yields
\begin{eqnarray*}
\|r_j((b-b_B)a)\|_{L^1} &\leq& \|(b-b_B)r_j\|_{L^1} + C \|[b,r_j](a)\|_{h^1}\\
&\leq& C \|b\|_{BMO_\theta} + C \|[b,r_j]\|_{h^1\to h^1}.
\end{eqnarray*}
By the local Riesz transforms characterization (see \cite[Theorem 2]{Go}), we get
\begin{equation}\label{converse for local hardy spaces}
\|(b-b_B)a\|_{h^1} \leq C \left(\|b\|_{BMO_\theta} + \sum_{j=1}^n \|[b,r_j]\|_{h^1\to h^1}\right),
\end{equation}
for all $(h^1,2)$-atom $a$ related to the ball $B$, where the constant $C$ is independent of $b$ and $a$. We now prove that $b\in LMO_\theta(\mathbb R^n)$. To do this, since $b\in BMO_\theta(\mathbb R^n)$, it is sufficient to show that
$$\frac{\log(e+1/r)}{(1+r)^\theta}\frac{1}{|B|}\int_B |b(x)- b_B| dx\leq C \left(\|b\|_{BMO_\theta} + \sum_{j=1}^n \|[b,r_j]\|_{h^1\to h^1}\right)$$
holds for all $B= B(x_0,r)$ the ball in $\mathbb R^n$ satisfying $0<r<1/2$. Indeed, let $f$ be the signum function of $b-b_B$ and $a= (2|B|)^{-1} (f- f_B)\chi_B$. Then it is easy to see that $a$ is an $(h^1,2)$-atom related to the ball $B$. We next consider the function
$$g_{x_0,r}(x) = \chi_{[0,r]}(|x- x_0|)\log(1/r) + \chi_{(r,1]}(|x-x_0|)\log(1/|x-x_0|).$$
Then, thanks to \cite[Lemma 2.5]{MSTZ}, we have $\|g_{x_0,r}\|_{bmo}\leq C$. Moreover, it is clear that $g_{x_0,r}(b-b_B)a\in L^1(\mathbb R^n)$. By (\ref{converse for local hardy spaces}) and $bmo(\mathbb R^n)=(h^1(\mathbb R^n))^*$,
\begin{eqnarray*}
\frac{\log(e+1/r)}{(1+r)^\theta}\frac{1}{|B|}\int_B |b(x)- b_B| dx &\leq& 3 \log(1/r) \frac{1}{|B|}\int_B |b(x)- b_B| dx\\
&=& 6 \left| \int_{\mathbb R^n} g_{x_0,r}(x)(b(x)-b_B)a(x)dx \right|\\
&\leq& C \|g_{x_0,r}\|_{bmo} \|(b-b_B)a\|_{h^1}\\
&\leq& C \left(\|b\|_{BMO_\theta} + \sum_{j=1}^n \|[b,r_j]\|_{h^1\to h^1}\right).
\end{eqnarray*} 
This proves that $b\in LMO_\theta(\mathbb R^n)$, moreover,
$$\|b\|_{LMO_\theta}\leq C \left(\|b\|_{BMO_\theta} + \sum_{j=1}^n \|[b,r_j]\|_{h^1\to h^1}\right).$$

\end{proof}

Let $b\in L^1_{\rm loc}(\bR^n)$. A function $a$ is called an $h^1_b$-atom related to the ball $B=B(x_0,r)$ if $a$ is a $(h^1,\infty)$-atom related to the ball $B=B(x_0,r)$, and when $0<r<1$, it also satisfies $\int_{\mathbb R^n} a(x)b(x) dx =0$.

We define $h^1_b(\bR^n)$ as the space of finite linear combinations of $h^1_b$-atoms. As usual, the norm on $h^1_b(\bR^n)$ is defined by
$$\|f\|_{h^1_b}=\inf\left\{ \sum_{j=1}^N \lambda_j a_j: f= \sum_{j=1}^N \lambda_j a_j \right\}.$$

Given $b\in BMO_{\infty}(\bR^n)$, similar to a result of P\'erez \cite[Theorem 1.4]{Pe}, we find a subspace of $h^1(\bR^n)$ for which $[b,T]$ is bounded from this space into $L^1(\bR^n)$. In particular, we have:

\begin{theorem}\label{L^1 estimate for the commutator}
Let $b\in BMO_\infty(\bR^n)$ and $T\in \mathscr L^m_{\rho,\delta}$  with  $0\leq \delta<1, 0< \rho\leq 1, \delta \leq \rho$ and $-(n+1)< m \leq - (n+1)(1-\rho)$. Then, $[b,T]$ is bounded from $h^1_b(\bR^n)$ into $L^1(\bR^n)$.
\end{theorem}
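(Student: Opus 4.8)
The plan is to mimic the strategy already used for part (i) of Theorem \ref{the main theorem}, but now exploiting the vanishing moment $\int a(x)b(x)\,dx = 0$ of an $h^1_b$-atom instead of relying on Theorem C to land back in $h^1$. Since $h^1_b(\bR^n)$ is defined as finite linear combinations of $h^1_b$-atoms, it suffices to prove the uniform estimate
$$\|[b,T](a)\|_{L^1}\leq C\,\|b\|_{BMO_\theta}$$
for every $h^1_b$-atom $a$ related to a ball $B=B(x_0,r)$, where $b\in BMO_\theta(\bR^n)$ for some $\theta\in[0,\infty)$. As in the body of the paper I would write $[b,T](a) = (b-b_B)Ta - T((b-b_B)a)$, so that the problem splits into controlling $\|(b-b_B)Ta\|_{L^1}$ and $\|T((b-b_B)a)\|_{L^1}$.

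For the first term, $\|(b-b_B)Ta\|_{L^1}\leq C\|b\|_{BMO_\theta}$ is exactly the content of Lemma \ref{the key lemma added}(i) (an $(h^1,\infty)$-atom being in particular an $(h^1,2)$-atom after normalization), so that piece is free. For the second term I would split according to whether $r\geq 1$ or $0<r<1$. If $r\geq 1$ there is no cancellation to use, but $(b-b_B)a$ is supported in $B$ with $\|(b-b_B)a\|_{L^2}\leq \|b-b_B\|_{L^2(B)}\|a\|_{L^\infty}\leq C(1+r)^\theta\|b\|_{BMO_\theta}|B|^{-1/2}$ by the definition of $BMO_\theta$, and then one estimates $\|T((b-b_B)a)\|_{L^1}$ by the same dyadic-annulus decomposition as in Lemma \ref{the key lemma}: on $2B$ use Hölder and $L^2$-boundedness of $T$, and on $2^{k+1}B\setminus 2^kB$ (where $|x-y|\geq 2^kr-r\geq 1$) use the rapid off-diagonal decay in Proposition \ref{AH, Theorem 1.1}(i) to absorb the factor $(1+r)^\theta$ and sum the geometric series. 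If $0<r<1$, the atom satisfies $\int a\,b = 0$, hence $\int (b-b_B)a = \int (b-b_B)a - 0 = \int (b(x)-b_B)a(x)\,dx$, and crucially $\int (b(x)-b_B)a(x)\,dx = \int b(x)a(x)\,dx - b_B\int a(x)\,dx$; the second integral may be nonzero because $(h^1,\infty)$-atoms with $r<1$ do have mean zero, so in fact $\int(b-b_B)a = \int b\,a = 0$ as well. Thus $(b-b_B)a$ is a genuine multiple of an $L^2$-atom with mean zero, supported in $B$, with $L^2$-norm bounded by $C\|b\|_{BMO_\theta}|B|^{-1/2}$ (using Lemma \ref{Ky2, Lemma 5.3}(i) with $k=0$ and $r<1$, the factor $(1+r)^{2\theta}\leq C$). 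Then $T((b-b_B)a)$ is handled precisely as in Lemma \ref{the key lemma}: close to $B$ via $L^2$-boundedness, and away from $B$ via the second moment estimate $|K(x,y)-K(x,x_0)|\leq C|y-x_0|/|x-x_0|^{(1+n+m)/\rho}$ of Proposition \ref{AH, Theorem 1.1}(ii), which is where the hypothesis $-(n+1)<m\leq -(n+1)(1-\rho)$ enters to guarantee both $1+n+m>0$ and summability of the dyadic series with exponent $c=\min\{1,(1+n+m)/\rho\}>0$.

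The main obstacle is the bookkeeping in the $r\geq 1$ case: there the atom carries no cancellation, so one must genuinely lean on the $N$-th order off-diagonal decay of the kernel (Proposition \ref{AH, Theorem 1.1}(i)) to beat the polynomial growth $(1+2^kr)^\theta$ coming from $\|b-b_B\|_{L^2(2^{k+1}B)}$ via Lemma \ref{Ky2, Lemma 5.3}(i), choosing $N$ larger than $2\theta$. Once that is arranged the two cases recombine and finiteness of $\|[b,T](a)\|_{L^1}$ with a bound depending only on $\|b\|_{BMO_\theta}$ (and $T$, $n$, $\theta$) follows by summing over $k$; extending by linearity to all of $h^1_b(\bR^n)$ then gives the theorem.
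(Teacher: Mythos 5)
Your proposal is correct and follows essentially the same route as the paper: the same splitting $[b,T](a)=(b-b_B)Ta-T((b-b_B)a)$, Lemma~\ref{the key lemma added}(i) for the first term, and the key observation that the moment condition $\int ab\,dx=0$ (together with $\int a\,dx=0$ when $0<r<1$) makes $(b-b_B)a$ a constant multiple of an $(h^1,2)$-atom. The only divergence is in the last step, and it is minor: where you re-run the annulus and kernel estimates of Lemma~\ref{the key lemma} to bound $\|T((b-b_B)a)\|_{L^1}$ directly, the paper simply invokes Theorem C to get $\|T((b-b_B)a)\|_{L^1}\leq \|T((b-b_B)a)\|_{h^1}\leq C\|b\|_{BMO_\theta}$.
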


\begin{proof}
Assume that $b\in BMO_\theta(\mathbb R^n)$ for some $\theta\in [0,\infty)$. It is sufficient to prove that for all $h^1_b$-atom $a$ related to the ball $B=B(x_0,r)$,
\begin{equation}
\|[b,T](a)\|_{L^1}\leq C \|b\|_{BMO_\theta}.
\end{equation}
Indeed, we first remark that supp $((b-b_B)a)\subset B$ and $\|(b-b_B)a\|_{L^2}\leq C \|b\|_{BMO_\theta}|B|^{1/2}$ by (i) of Lemma \ref{Ky2, Lemma 5.3}. Moreover, if $0<r<1$, then $\int_{\bR^n} (b(x)-b_B)a(x)dx = \int_{\bR^n} a(x)b(x)dx - b_B \int_{\bR^n} a(x)dx=0$. Therefore, $(b-b_B)a$ is a multiple of an $(h^1,2)$-atom. So, by (i) of Lemma \ref{the key lemma added} and Theorem C, we get
\begin{eqnarray*}
\|[b,T](a)\|_{L^1} &\leq& \|(b-b_B)Ta\|_{L^1} + \|T((b-b_B)a)\|_{L^1}\\
&\leq& C\|b\|_{BMO_\theta},
\end{eqnarray*}
which ends the proof of Theorem \ref{L^1 estimate for the commutator}.

\end{proof}


\end{document}